\colorlet{mdtRed}{red!50!black}
\definecolor{dblue}{rgb}{0,0,.6}
\DeclareMathOperator{\Malpha}{\mathcal{M}_{\alpha}} 
\DeclareMathOperator{\Mbeta}{\mathcal{M}_{\beta}}
\DeclareMathOperator{\Mgamma}{\mathcal{M}_{\gamma}}
\DeclareMathOperator{\phialpha}{\phi_{\alpha}}
\DeclareMathOperator{\phibeta}{\phi_{\beta}}
\DeclareMathOperator{\psialpha}{\psi_{\alpha}}
\DeclareMathOperator{\psibeta}{\psi_{\beta}}
\DeclareMathOperator{\Sigmagamma}{\Sigma_{\gamma}}
\DeclareMathOperator{\Pnalpha}{\mathbb{P}^{n_{\alpha}}} 
\DeclareMathOperator{\Pnbeta}{\mathbb{P}^{n_{\beta}}}
\DeclareMathOperator{\Q}{\mathbb{Q}}
\DeclareMathOperator{\xtl}{[\{x\}\times \textit{l}]}
\DeclareMathOperator{\CH}{\textnormal{CH}}
\newtheorem{theorem}{Theorem}[section]
\newtheorem{lemma}[theorem]{Lemma} 
\newtheorem{proposition}[theorem]{Proposition}
\theoremstyle{definition}
\newtheorem{definition}[theorem]{Definition}
\newtheorem{remark}[theorem]{Remark}
\numberwithin{equation}{section} 
\begin{document}
	
\baselineskip=15.5pt 
	
\title[Chow group of 1-cycles of moduli of parabolic bundles]{Chow group of 1-cycles of the moduli of parabolic bundles over a curve}

\author{Sujoy Chakraborty} 
\address{School of Mathematics, Tata Institute of Fundamental Research, Homi Bhabha Road, Colaba, Mumbai 400005, India.}
\email{sujoy@math.tifr.res.in}
\thanks{E-mail : sujoy@math.tifr.res.in}
\thanks{Address : School of Mathematics, Tata Institute of Fundamental Research, Homi Bhabha Road, Colaba, Mumbai 400005, India.}
\subjclass[2010]{14C15, 14D20, 14D22, 14H60}
\keywords{Chow groups; Moduli space; Parabolic bundle.} 	

\begin{abstract}
	We study the Chow group of 1-cycles of the moduli space of semistable parabolic vector bundles of fixed rank, determinant and a generic weight over a nonsingular projective curve over $\mathbb{C}$ of genus at least 3. We show that, the Chow group of 1-cycles remains isomorphic as we vary the generic weight. As a consequence, we can give an explicit description of the Chow group in the case of rank 2 and determinant $\mathcal{O}(x)$, where $x\in X$ is a fixed point, which extends the earlier result of \cite[Main Theorem]{CH}.
\end{abstract}

\maketitle

\section{Introduction}
	
Let $X$ be a nonsingular projective curve of genus $g\geq 3$ over $\mathbb{C}$. Let $\mathcal{M}(r,\mathcal{L})$ denote the moduli space of isomorphism classes of stable vector bundles of rank r and fixed determinant $\mathcal{L}$ over $X$. Let us moreover fix a set of $n$ distinct closed points $S$ over $X$, referred to as \textit{parabolic points}, and let $\mathcal{M}(r,\mathcal{L}, \alpha)$ denote the moduli space of isomorphism classes of parabolic stable vector bundles of rank r, determinant $\mathcal{L}$, \textit{full} flags along the parabolic points, and generic weight $\alpha$ over $X$. The Chow groups of these moduli spaces are interesting objects to study. Not much is known about the explicit desctiption of these Chow groups (see, e.g. the Introduction in \cite{CH} for a list of known results in case of moduli of vector bundles). Here our aim is to study the Chow group of 1-cycles for the moduli space $\mathcal{M}(r,\mathcal{L}, \alpha)$. We outline the content of the paper and our strategy of proof below:

In section 2, we briefly recall the notions necessary for our discussions, like (parabolic) semistability and stability of (parabolic) vector bundles, their moduli spaces, Chow groups and so on. In section 3, we begin our study of the Chow group of 1-cycles for the moduli of Parabolic bundles over $X$. The extra data of parabolic structure makes it necessary to study the effect of varying the weights. Below, we denote $\mathcal{M}(r, \mathcal{L}, \alpha)$ by just $\Malpha$, since the r and $\mathcal{L}$ will remain fixed. The main result of section 3 is the following:

\begin{theorem} [Theorem \ref{thm-3.10}]
	For any two generic weights $\alpha$ and $\beta$, there exists a canonical isomorphism 
	\[\CH_1^{\Q}(\Malpha) \cong \CH_1^{\Q}(\Mbeta).\]
\end{theorem}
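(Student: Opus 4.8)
The plan is to reduce the comparison to a single wall-crossing and then analyze the resulting birational modification through the common degeneration $\Mgamma$. First I would invoke the chamber structure on the space of generic weights: the non-generic weights form a finite union of walls (codimension-one loci cut out by linear equations), the space $\Malpha$ is literally constant up to isomorphism as $\alpha$ varies within a chamber, and any two generic weights $\alpha,\beta$ can be joined by a path meeting the walls one at a time. By transitivity it therefore suffices to treat the case in which $\alpha$ and $\beta$ lie in two chambers adjacent across a single wall, whose relative interior determines a wall-weight $\gamma$.

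For such a pair I would use the wall-crossing geometry encoded in the morphisms $\phialpha\colon\Malpha\to\Mgamma$ and $\phibeta\colon\Mbeta\to\Mgamma$ to the moduli space of $\gamma$-semistable parabolic bundles: these are isomorphisms over the open locus $U\subseteq\Mgamma$ of $\gamma$-stable bundles, while over the closed locus $\Sigmagamma$ of properly $\gamma$-semistable bundles they restrict to Zariski-locally trivial projective bundles $\psialpha$ and $\psibeta$ with fibres $\Pnalpha$ and $\Pnbeta$, where $n_\alpha,n_\beta\ge 1$ (these fibres parametrize the extensions that become stable on one side of the wall). Writing $E_\alpha=\phialpha^{-1}(\Sigmagamma)$ and $E_\beta=\phibeta^{-1}(\Sigmagamma)$, and keeping $\CH_1^{\Q}(U)$ as the common quotient, I would organize the localization sequence
\[ \CH_1^{\Q}(E_\alpha)\to \CH_1^{\Q}(\Malpha)\to \CH_1^{\Q}(U)\to 0, \]
its $\beta$-analogue, and the corresponding sequence for the pair $(\Sigmagamma,\Mgamma)$ into a commutative ladder along $\phialpha$ and $\phibeta$. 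The decisive computational input is the projective bundle formula: for a $\mathbb{P}^{n}$-bundle $P\to B$ with $n\ge 1$ one has $\CH_1^{\Q}(P)\cong\CH_1^{\Q}(B)\oplus\CH_0^{\Q}(B)$ and $\CH_0^{\Q}(P)\cong\CH_0^{\Q}(B)$, both independent of the fibre dimension $n$. Applied to $\psialpha$ and $\psibeta$, this identifies $\CH_1^{\Q}(E_\alpha)$ and $\CH_1^{\Q}(E_\beta)$ canonically with $\CH_1^{\Q}(\Sigmagamma)\oplus\CH_0^{\Q}(\Sigmagamma)$, the extra $\CH_0^{\Q}(\Sigmagamma)$-summand being spanned by lines in the fibres, which $\phialpha$ (respectively $\phibeta$) contracts.

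Assembling these, I would show that for each generic weight the proper pushforward fits into a short exact sequence
\[ 0\to \CH_0^{\Q}(\Sigmagamma)\to \CH_1^{\Q}(\Malpha)\xrightarrow{(\phialpha)_*} \CH_1^{\Q}(\Mgamma)\to 0, \]
in which the left map sends a point of $\Sigmagamma$ to the class of a line in its fibre; the $\beta$-analogue has the same outer terms, and both sequences admit compatible $\Q$-splittings (for instance via strict transforms of curves on $\Mgamma$, using that $\Malpha$ is smooth and $\phialpha$ is proper birational). Composing then yields the desired canonical isomorphism $\CH_1^{\Q}(\Malpha)\cong\CH_1^{\Q}(\Mgamma)\oplus\CH_0^{\Q}(\Sigmagamma)\cong\CH_1^{\Q}(\Mbeta)$, and chaining over the finitely many walls gives the general statement.

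The step I expect to be the main obstacle is establishing exactness on the left of the displayed sequence, that is, that the kernel of $(\phialpha)_*$ is precisely the span of the contracted fibre-lines. Because $\Mgamma$ is singular and the map $\CH_1^{\Q}(\Sigmagamma)\to\CH_1^{\Q}(\Mgamma)$ induced by the inclusion need not be injective, a priori the horizontal lifts of its kernel could also lie in $\ker(\phialpha)_*$, and one must rule this out; equivalently, one must show that crossing the wall modifies $1$-cycles only by the common $\CH_0^{\Q}(\Sigmagamma)$ worth of fibre-lines. I would handle this by a diagram chase in the localization ladder, using that $(\psialpha)_*$ restricts to an isomorphism on the horizontal part of $\CH_1^{\Q}(E_\alpha)$ and that the two sides share the identical $\Mgamma$-data; this symmetry, which is genuinely independent of $\alpha$ and $\beta$, is what forces the two kernels to match and ultimately makes the resulting isomorphism canonical.
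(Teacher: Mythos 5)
Your reduction to a single wall-crossing and your description of the wall-crossing geometry ($\phialpha$, $\phibeta$, the projective bundles over $\Sigmagamma$) match the paper's setup, but the core of your argument has two genuine gaps, both stemming from the fact that you work directly with the singular variety $\Mgamma$ rather than with a smooth common model. First, the claimed exactness of
\[
0\to \CH_0^{\Q}(\Sigmagamma)\to \CH_1^{\Q}(\Malpha)\xrightarrow{(\phialpha)_*} \CH_1^{\Q}(\Mgamma)\to 0
\]
in the middle is not established. You correctly identify the problem: a $1$-cycle supported on $E_\alpha=\phialpha^{-1}(\Sigmagamma)$ whose horizontal component lies in $\ker\bigl(\CH_1^{\Q}(\Sigmagamma)\to\CH_1^{\Q}(\Mgamma)\bigr)$ is killed by $(\phialpha)_*$ without any reason to be a multiple of the fibre-line class. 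Your proposed fix --- that ``the two sides share the identical $\Mgamma$-data'' and symmetry forces the kernels to match --- does not work as stated: the two extra contributions are images of the same group $\ker\bigl(\CH_1^{\Q}(\Sigmagamma)\to\CH_1^{\Q}(\Mgamma)\bigr)$ under \emph{different} maps (horizontal lift into $E_\alpha$ followed by pushforward to $\Malpha$, versus the analogue for $\beta$), and nothing in the localization ladder identifies their kernels or images. Without this, you cannot even define the map $\CH_1^{\Q}(\Malpha)\to\CH_1^{\Q}(\Mbeta)$, let alone show it is an isomorphism. Second, the splitting ``via strict transforms of curves on $\Mgamma$'' is not a construction: strict transform is not compatible with rational equivalence (a family of curves degenerating into $\Sigmagamma$ need not have strict transforms forming a family), and since $\Mgamma$ is singular there is no flat or l.c.i.\ pullback along $\phialpha$ to substitute for it. So neither the surjection's splitting nor its compatibility between the $\alpha$- and $\beta$-sides is available.

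The paper avoids both problems by never asking for a pullback to or from $\Mgamma$: it forms the fibre product $\mathcal{N}=\Malpha\times_{\Mgamma}\Mbeta$, which is simultaneously the blow-up of the \emph{smooth} varieties $\Malpha$ and $\Mbeta$ along the smooth centres $\phialpha^{-1}(\Sigmagamma)$ and $\phibeta^{-1}(\Sigmagamma)$. The blow-up formula (valid in this smooth setting) gives two direct-sum decompositions of $\CH_1^{\Q}(\mathcal{N})$ with complementary summands $\CH_0^{\Q}(\phialpha^{-1}(\Sigmagamma))\cong\Q\cong\CH_0^{\Q}(\phibeta^{-1}(\Sigmagamma))$, and the theorem reduces to the purely linear-algebraic observation that an isomorphism $\Q\oplus V\cong\Q\oplus W$ induces $V\cong W$ once one knows a single off-diagonal component is nonzero. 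That nonvanishing (Proposition \ref{mainprop}: the class of a fibre line $\{x\}\times l$ in $\mathcal{N}$ is not of the form $\psibeta^*\psibeta_*$ of itself) is the real content, proved by an excess-intersection computation; your proposal contains no substitute for this step. If you want to salvage your route, you would still need an analogue of that computation, at which point the common blow-up is the natural place to do it.
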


Our strategy for proving this is to first prove the result for generic weights $\alpha$ and $\beta$ which are separated by a single hyperplane (called \textit{walls}, cf. Section 2) inside the set of all possible weights (denoted by $V_m$, cf. Section 2), and choosing the weight $\gamma$ which is the point of intersection of the hyperplane and the line joining $\alpha$ and $\beta$. We note that $\gamma$ is not a generic weight, and hence $\Mgamma$ is a normal projective variety. To relate the Chow groups for $\Malpha$ and $\Mbeta$, our approach is to use \cite[Theorem 4.1]{BY} (equivalently \cite[Theorem 3.1]{BH}), which says that there exist maps

\[
\xymatrix{
	\Malpha \ar[rd]_{\phialpha}
	& 
	&\Mbeta \ar[ld]^{\phibeta} \\
	&\Mgamma
}
\] 

which act as resolution of singularities for $\Mgamma$. Next, we consider the fibre product $ \mathcal{N} := \Malpha \underset{\Mgamma}{\times}\Mbeta$, which is actually a common blow-up of $\Malpha$ and $\Mbeta$ along suitable subvarieties(cf. discussion in the Introduction in \cite{BH} above Section 2). We use the Blow-up formula for Chow groups as in \cite[Theorem 9.27]{Voi} to first relate both $\CH_{1}^{\Q}(\Malpha)$ and $\CH_{1}^{\Q}(\Mbeta)$ with $\CH_{1}^{\Q}(\mathcal{N})$, and then through a series of manipulations, we finally get our required isomorphism. To conclude for all generic weights is straightforward from here.

As a consequence, we can give an explicit description of the Chow group when rank is 2 and determinant is $\mathcal{O}_X(x)$ for some closed point $x\in X$. We do this in Section 4, where we prove the following result:
\begin{theorem}[Theorem \ref{thm-4.4}]
	In case of rank 2 and determinant $\mathcal{O}_X(x)$, for any generic weight $\alpha$, we have
	 \[\CH_1^{\Q}(\mathcal{M}_\alpha) \cong \Q^n \oplus\,\CH_0^{\Q}(X),\,\,\textnormal{where}\,\,n=|S|.\]
\end{theorem}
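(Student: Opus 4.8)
The plan is to exploit Theorem \ref{thm-3.10}: since $\CH_1^\Q(\Malpha)$ does not depend on the generic weight $\alpha$, it suffices to compute it for one convenient weight. Write $\mathcal{M} := \mathcal{M}(2,\mathcal{O}_X(x))$. I would take $\alpha$ to be a \emph{small} generic weight, lying in the chamber of $V_m$ adjacent to the origin. For such a weight a parabolic structure $(E,\{\ell_s\}_{s\in S})$ is parabolic stable exactly when the underlying bundle $E$ is stable: for any sub-line-bundle $L\subset E$, stability of $E$ forces $\deg L\le 0<\tfrac12=\mu(E)$, and for sufficiently small weights the parabolic corrections cannot close this gap, so the parabolic slope inequality holds for every choice of flags. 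As $\deg\mathcal{O}_X(x)=1$ is odd there are no strictly semistable rank-$2$ bundles, so every point of $\mathcal{M}$ underlies a parabolic stable bundle.

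Hence the forgetful morphism $\Malpha\to\mathcal{M}$ is a fibre bundle whose fibre over $[E]$ is $\prod_{s\in S}\mathbb{P}(E_s)\cong(\mathbb{P}^1)^n$, since a full flag in a rank-$2$ fibre is just a line. Oddness of the degree also makes $\mathcal{M}$ a fine moduli space carrying a universal bundle $\mathcal{E}$ on $\mathcal{M}\times X$; restricting to $\{s\}\times\mathcal{M}$ yields rank-$2$ bundles $\mathcal{E}_s$, and $\Malpha$ is identified with the $n$-fold fibre product $\mathbb{P}(\mathcal{E}_{s_1})\times_{\mathcal{M}}\cdots\times_{\mathcal{M}}\mathbb{P}(\mathcal{E}_{s_n})$ (and $\mathbb{P}(\mathcal{E}_s)$ is insensitive to the ambiguity in $\mathcal{E}$ by twists from $\mathcal{M}$). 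This exhibits $\Malpha$ as a tower
\[
\Malpha=P_n\to P_{n-1}\to\cdots\to P_1\to P_0=\mathcal{M},
\]
each arrow a $\mathbb{P}^1$-bundle.

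I would then run the projective-bundle formula up this tower. For a $\mathbb{P}^1$-bundle $\pi:P\to Y$ it gives $\CH_1^\Q(P)\cong\CH_0^\Q(Y)\oplus\CH_1^\Q(Y)$ and $\CH_0^\Q(P)\cong\CH_0^\Q(Y)$. The base data are $\CH_0^\Q(\mathcal{M})\cong\Q$ (the moduli space is rational, hence rationally connected) and $\CH_1^\Q(\mathcal{M})\cong\CH_0^\Q(X)$, which is precisely \cite[Main Theorem]{CH}. A short induction gives $\CH_0^\Q(P_j)\cong\Q$ and $\CH_1^\Q(P_j)\cong\Q^j\oplus\CH_0^\Q(X)$ for all $j$, and $j=n$ yields $\CH_1^\Q(\Malpha)\cong\Q^n\oplus\CH_0^\Q(X)$.

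The main obstacle is not this final homological computation, which is routine once the geometry is in place, but the rigorous identification in the middle step: one must check that for a small generic weight $\Malpha$ is genuinely the \emph{entire} iterated $\mathbb{P}^1$-bundle, with no locus deleted by stability and with the bundle structure globally defined. Both enabling facts — automatic preservation of stability for small weights, and existence of the Poincar\'e bundle making the projectivizations Zariski-locally-trivial $\mathbb{P}^1$-bundles rather than mere fibrations — hinge on $\deg\mathcal{O}_X(x)$ being odd. I would therefore spend the most effort pinning down the small-weight chamber and verifying that the identification of $\Malpha$ with the tower is an isomorphism of varieties; Theorem \ref{thm-3.10} then removes any dependence on the chosen weight.
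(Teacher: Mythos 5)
Your proposal is correct and follows essentially the same route as the paper: reduce to a small generic weight via Theorem \ref{thm-3.10}, use the small-weight criterion (the paper cites \cite[Proposition 5.2]{BY}) to identify $\Malpha$ with the fibre product $\mathbb{P}(\mathcal{E}_1)\times_{\mathcal{M}}\cdots\times_{\mathcal{M}}\mathbb{P}(\mathcal{E}_n)$ over the fine moduli space $\mathcal{M}$, then iterate the projective bundle formula and invoke \cite[Main Theorem]{CH} together with rationality of $\mathcal{M}$ to get $\CH_0^{\Q}$ of each stage equal to $\Q$. The points you flag as needing care (no locus deleted by stability, global bundle structure from the Poincar\'e bundle) are exactly the content of the paper's Lemma \ref{projbund}.
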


The main idea here is to show that for a sufficiently small generic weight $\alpha$ (as in Proposition \ref{prop-4.1}), $\Malpha$ has the structure of a $(\mathbb{P}^1)^n$-bundle over $\mathcal{M}$; we show this in Lemma \ref{projbund}. This, together with projective bundle formula for Chow groups as in \cite[Theorem 9.25]{Voi}, enable us to explicitly write down $\CH_{1}^{\Q}(\Malpha)$ in terms of $\CH_{1}^{\Q}(\mathcal{M})$, and the latter is known to be isomorphic to $\CH_0^{\Q}(X)$ by \cite[Main Theorem]{CH}. The case for arbitrary generic weight then follows from Theorem 1.1.

\section{Preliminaries}

\subsection{Semistability and stability of vector bundles}
	
Let $X$ be a nonsingular projective curve over $\mathbb{C}$. Let $E$ be a holomorphic vector bundle of rank $r$ over $X$. \\
Here onwards, by a $variety$ we will always mean an irreducible quasi-projective variety.
	
\begin{definition}[Semistability and stability]
	The \textit{degree} of $E$, denoted $deg(E)$, is defined as the degree of the line bundle $det(E) := \wedge^r E$. The \textit{slope} of $E$, denoted $\mu(E)$, is defined as 
	\[\mu(E) := \dfrac{deg(E)}{r}\]

	$E$ is called \textit{semistable (resp. stable)}, if for any sub-bundle $F\hookrightarrow E, \, 0< rank(F) < r$, we have 
	\[\mu(F) \, \substack{\leq \\ (resp.\,<)}\,\mu(E).\]
\end{definition}
	
It is easy to check that if $gcd(r, deg(E))=1$, then the notion of semistability and stability coincide for a vector bundle $E$.
	
\subsection{Moduli space of vector bundles}\label{sec-2.2}
We briefly recall the notion of the moduli space of vector bundles over $X$. If $E$ is a semistable bundle of rank $r$, then there exists a \textit{Jordan-H\"older filtration} for $E$ given by 
\[E = E_k \supsetneq E_{k-1} \supsetneq \cdots \supsetneq E_1 \supsetneq 0\]

The filtration is not unique, but the associated graded object $\textrm{gr}(E) := \bigoplus_{i=1}^{k} E_i/E_{i-1}$ is unique upto isomorphism. Two vector bundles $E$ and $E'$ are called \textit{S-equivalent} if $\textrm{gr}(E)\cong \textrm{gr}(E')$. When $E, E'$ are stable, being S-equivalent is same as being isomorphic as vector bundles over $X$.

The moduli space of S-equivalence classes of vector bundles of rank $r$ and determinant $\mathcal{L}$ on $X$, denoted $\mathcal{M}({r,\mathcal{L}})$, is a normal projective variety of dimension $(r^2-1)(g-1)$; its singular locus is given by the strictly semistable bundles (bundles which are semistable but not stable). 

In the case when $gcd(r,deg(\mathcal{L)})=1$, $\mathcal{M}({r,\mathcal{L}})$ is the isomorphism class of stable vector bundles on $X$. It is a nonsingular projective variety; moreover, it is a fine moduli space.

When $r,\mathcal{L}$ are fixed, we shall denote the moduli space by $ \mathcal{M} $, when there is no scope for confusion.
\subsection{Parabolic bundles and stability}

	\begin{definition}[Parabolic bundles]
		Let us fix a set $S$ of $n$ distinct closed points on $X$. A \textit{parabolic vector bundle of rank r on X} is a holomorphic vector bundle $E$ on $X$ with a \textit{parabolic structure along points of S}. By this, we mean a collection weighted flags of the fibres of $E$ over each point $p\in S$:
		\begin{align}
		E_p &= E_{p,1} \supsetneq E_{p,2} \supsetneq ... \supsetneq E_{p,s_p} \supsetneq E_{p, s_{p+1}}= 0, \\
		0 &\leq \alpha_{p,1} < \alpha_{p,2} < ... \,< \alpha_{p,s_p}\, < 1,
		\end{align}
		
		where $s_p$ is an integer between $1$ and $r$. The real number $\alpha_{p,i}$ is called the \textit{weight attached to the subspace} $E_{p,i}$. 	
		The \textit{multiplicity} of the weight $\alpha_{p,i}$ is the integer $m_{p,i} := dim(E_{p,i}) - dim(E_{p,i-1})$. Thus $\sum_i m_{p,i} = r$. 
		We call the flag to be $full$ if $s_p=r,$ or equivalently $m_{p,i} =1 \,\forall i$.
	\end{definition}
	
	Let $\alpha := \{(\alpha_{p,1}, \alpha_{p,2}, ..., \,\alpha_{p,s_p}\,)\,|\,p\in S\}$ and $m:= \{(m_{p,1},...,\,m_{p,s_p}\,)\,|\, p\in S\}$. We call the tuple $(r,\,\mathcal{L},\,m,\,\alpha)$ as the \textit{parabolic data} for the parabolic bundle $E$, where $\mathcal{L} := det(E)$. 
	Usually we denote the parabolic bundle as $E_*$ to distinguish from the underlying vector bundle $E$.
	
	\begin{definition}[Parabolic degree and slope]
		The \textit{degree} of a parabolic bundle $E_*$ is defined as $deg(E)$, $E$ being the underlying vector bundle. The \textit{Parabolic degree} of $E_*$, denoted $Pardeg(E),$ is defined as
		\[Pardeg(E_*):= deg(E) + \sum_{p\in S}\sum_{i=1}^{s_P}m_{p,i}\alpha_{p,i}.\]
		
		The \textit{parabolic slope} of $E_*$ is defined as
		\[Par\mu(E_*) := \dfrac{Pardeg(E_*)}{rank(E)}.\]
	\end{definition}
	
	\begin{definition}[Parabolic semistability and stability]
		Any vector sub-bundle $F\hookrightarrow E$ obtains a parabolic structure in a canonical way: For each $p\in S$, the flag at $F_p$ is obtained intersecting $F_p$ with the flag at $E_p$, and the weight attached to the subspace $F_{p,j}$ is $\alpha_k$, where $k$ is the largest integer such that $F_{p,j}\subseteq E_{p,k}$. (for more details see \cite[Definition 1.7]{MS}.) We call the resulting parabolic bundle to be a \textit{parabolic sub-bundle,} and denote it by $F_*$.
		
		A parabolic bundle $E_*$ is called \textit{parabolic semistable (resp. parabolic stable)}, if for every proper sub-bundle $F\hookrightarrow E$ we have
		\[Par\mu(F_*)\, \substack{\leq \\ (resp. <)} \,Par\mu(E_*).\]  
	\end{definition}
	
\subsection{Generic weights and walls}

We briefly recall the notion of \textit{generic weights} and \textit{walls}. For more details we refer to \cite{BH,BY}.

Fix a set $S$, rank $r$, line bundle $\mathcal{L}$ on $X$ and multiplicities $m$ as defined above. Let $\Delta^r:= \{(a_1,..., a_r)\, | \, 0\leq a_1 \leq ... \leq a_r <1\}$, and define $W := \{\alpha : S \rightarrow \Delta^r\}$. Note that the elements of $W$ determine both weights and the multiplicities at the parabolic points, and hence a parabolic data. Conversely, given any parabolic data $(r, \mathcal{L}, m, \alpha)$, we can associate a map $S \rightarrow \Delta^r$, by repeating each weight $\alpha_{p,i}$ according to its multiplicty $m_{p,i}$.
This leads to a natural notion of when a given weight $\alpha$ is \textit{compatible} with the multiplicity $m$. 
The set of all weights compatible with $m$ is a product of $|S|$-many simplices. We denote by $V_m$ the set of all weights compatible with $m$.

Let $\alpha \in V_m$. If a parabolic bundle $E_*$ with data $(r, \mathcal{L}, m, \alpha)$ is parabolic semistable but not parabolic stable, then it would contain a parabolic sub-bundle with same parabolic slope. It is easy to see that this gives a linear condition on $V_m$, i.e. such weights belong to the intersection of a hyperplane with $V_m$.

There can be only finitely many such hyperplanes (see \cite{BY,BH}); call them $H_1,...,H_l$.

\begin{definition}(Walls and generic weights)
	We call each of the intersections $H_i \cap V_m$ a \textit{wall} in $V_m$. There are only finitely many such walls.
	
	We call the connected components of $V_m \setminus \cup_{1\leq i \leq l}H_i$ as \textit{chambers}, and weights belonging to these chambers are called \textit{generic}.
	
\end{definition}

Clearly, for weights in $V_m \setminus \cup_{1\leq i \leq l}H_i$, a parabolic bundle is parabolic semistable iff it is parabolic stable.

\subsection{Moduli of parabolic bundles}\label{sub-2.5}
Again, we briefly recall the notion of moduli space of parabolic semistable bundles over $X$. The construction is analogous to section \ref{sec-2.2}; for details we refer to \cite{MS}.\\

for a parabolic semistable bundle $E_*$ with fixed parabolic data $(r,\mathcal{L},m,\alpha)$, there exists a Jordan-Holder filtration, and an associated graded object $gr_{\alpha}(E_*)$ analogous to section \ref{sec-2.2}. Again, we call two parabolic semistable bundles to be $S$-equivalent if their associated graded objects are isomorphic. Let $\mathcal{M}(r,\mathcal{L},m,\alpha)$ denote the moduli space of S-equivalence classes of parabolic semistable bundles over $X$ with parabolic data $(r,\mathcal{L},m,\alpha).$ It is a normal projective variety, with singular locus given by the strictly semistable bundles. When $r,\mathcal{L},m$ are fixed, we will denote the moduli space by $\Malpha$ if no confusion occurs.

For generic weight $\alpha$, $\mathcal{M}_\alpha = $ moduli space of isomorphism classes of parabolic stable bundles on $X$, is a nonsingular projective variety; moreover, it is a fine moduli space (\cite[Proposition 3.2]{BY}). 

 \subsection{Chow groups}
For a variety $Y$ over $\mathbb{C}$,  let $Z_k(Y)$ denote the free abelian group generated by the irreducible $k$-dimensional closed subvarieties of $Y$. The \textit{Chow group of k-cycles}, denoted $\CH_k(Y)$, is given by 
\[\CH_k(Y) := \dfrac{Z_k(Y)}{\sim},\]

where $\sim$ denotes "rational equivalence". We refer to \cite[Section 9]{Voi} and \cite{Ful} for the details regarding Chow groups and the related notions (proper pushforward and flat pullback of cycles, intersection product, Chern class of vector bundles etc.)\\
Let $\CH_k^{\Q}(Y) := \CH_k(Y) \otimes_{\mathbb{Z}}\Q$; this is a $\Q$-vector space. By a slight abuse of notation, throughout the rest of the discussion, we will address $\CH_k^{\Q}(Y)$ as 'Chow group' as well, since no confusion will arise. \\
We recall a few results from \cite{Ful} which we will require in section 3:

\begin{theorem}[\text{\cite[Theoerm 3.3]{Ful}}]\label{thm-2.7}
	Let $E$ be a vector bundle of rank $r$ on $Y$, with pfojection $\pi: E\rightarrow Y$. The flat pull-back 
	\[\pi^* : \CH_{k-r}(Y) \rightarrow \CH_k(E)\]
	is an isomorphism for all $k$. 
\end{theorem}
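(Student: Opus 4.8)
The statement asserts two things about the flat pull-back $\pi^{*}\colon \CH_{k-r}(Y)\to\CH_k(E)$ along a rank-$r$ vector bundle: that it is surjective and that it is injective. Since $\dim E=\dim Y+r$, the dimension shift by $r$ is the expected one, and I would treat the two halves by entirely different mechanisms. For surjectivity I would run a Noetherian induction on $\dim Y$ built on the localization (right-exact) sequence for Chow groups; for injectivity I would exhibit an explicit left inverse coming from the zero section. The geometric engine behind surjectivity is the computation of the Chow groups of an affine-line bundle, to which the general case reduces.

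For surjectivity, I would argue as follows. Let $V\subseteq E$ be an irreducible $k$-dimensional subvariety and let $W=\overline{\pi(V)}$ be the closure of its image, so that $V\subseteq E|_{W}$. If $V=E|_{W}$, i.e. $V$ is the whole restricted bundle, then $\dim W=k-r$ and $[V]=\pi^{*}[W]$ already lies in the image. Otherwise $V\subsetneq E|_{W}$, and the task is to show $[V]$ is rationally equivalent to a combination of $\pi^{*}$-classes and cycles supported over a proper closed subset of $W$, which are handled by the induction. To organize this I would use, for a closed $Z\subseteq Y$ with open complement $U=Y\setminus Z$, the localization sequence
\[ \CH_k\big(\pi^{-1}(Z)\big)\longrightarrow \CH_k(E)\longrightarrow \CH_k\big(\pi^{-1}(U)\big)\longrightarrow 0. \]
Choosing $U$ over which $E$ trivializes, $E|_U\cong U\times\mathbb{A}^{r}$, reduces the question to a trivial bundle; factoring $U\times\mathbb{A}^{r}=(U\times\mathbb{A}^{r-1})\times\mathbb{A}^1$ and inducting on $r$ reduces it to the rank-one case $U\times\mathbb{A}^1$, while the class supported over $Z$ is absorbed by the inductive hypothesis on $\dim Y$. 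Finally, for the affine line I would compactify to $U\times\mathbb{P}^1$ and use the localization sequence for $U\times\{\infty\}\hookrightarrow U\times\mathbb{P}^1$ together with $\CH_k(U\times\mathbb{P}^1)\cong\CH_k(U)\oplus\CH_{k-1}(U)$; since the section at infinity is rationally equivalent to the zero section, the quotient is exactly $\CH_{k-1}(U)$, realized by $\pi^{*}$.

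For injectivity, I would construct a retraction of $\pi^{*}$. The zero section $s_0\colon Y\hookrightarrow E$ is a regular closed embedding of codimension $r$, so there is a Gysin (refined intersection) map $s_0^{!}\colon \CH_k(E)\to\CH_{k-r}(Y)$. The standard compatibility of Gysin pull-back with flat pull-back along the projection gives $s_0^{!}\circ\pi^{*}=\mathrm{id}_{\CH_{k-r}(Y)}$, since restricting a bundle-pulled-back class to the zero section recovers the original class. Hence $\pi^{*}$ is a split monomorphism, which together with the surjectivity above yields the asserted isomorphism.

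I expect the genuine difficulty to lie in the surjectivity argument, specifically in the rank-one base step: making rigorous that an irreducible $V\subsetneq U\times\mathbb{A}^1$ dominating $U$ can be moved, up to rational equivalence, into cycles lying over a proper closed subset of $U$ together with $\pi^{*}$-classes. This is where explicit rational functions in the fibre coordinate enter and where the bookkeeping is most delicate. A secondary point requiring care is the logical ordering for injectivity: the Gysin map $s_0^{!}$ presupposes the deformation-to-the-normal-cone construction of the refined intersection product, so one must either ensure this machinery is available independently of the statement being proved, or else replace the retraction argument by the projective-completion computation using $\mathbb{P}(E\oplus\mathcal{O}_Y)$ and its hyperplane at infinity $\mathbb{P}(E)$.
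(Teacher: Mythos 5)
This statement is quoted in the paper directly from Fulton (\cite[Theorem 3.3]{Ful}) and is not proved there, so there is no in-paper argument to compare against; your proposal is, in effect, a reconstruction of Fulton's own proof, and its overall architecture is the correct one: surjectivity by Noetherian induction and localization, reducing to a trivial bundle and then to rank one, plus injectivity by exhibiting a retraction.

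Two points deserve flagging. First, in the rank-one base case you invoke $\CH_k(U\times\mathbb{P}^1)\cong \CH_k(U)\oplus\CH_{k-1}(U)$, but the projective bundle formula is normally \emph{deduced from} homotopy invariance of Chow groups, so as written this step is circular unless you prove the needed surjectivity for $U\times\mathbb{P}^1$ independently. The honest (and standard) argument avoids the compactification entirely: for an irreducible $V\subsetneq W\times\mathbb{A}^1$ dominating $W$ with $\dim V=\dim W=k$, the ideal of $V$ in $K(W)[t]$ is principal, generated by some $f$; computing $\mathrm{div}(f)$ on $W\times\mathbb{A}^1$ shows that $[V]$ differs from a multiple of $[W\times\mathbb{A}^1]=\pi^*[W]$ (when $\dim W = k-1$, not the case here) or from $0$ by cycles supported over proper closed subsets of $W$, which the Noetherian induction absorbs. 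You correctly identify this as the locus of difficulty but leave it unexecuted, and since it carries essentially all the content of the surjectivity half, the proposal as it stands is a plan rather than a proof. Second, your worry about the Gysin map $s_0^{!}$ is well founded: in Fulton's development the refined Gysin maps come after this theorem, so the retraction must instead be built from the projective completion $\mathbb{P}(E\oplus\mathcal{O}_Y)$, extending a class on $E$ to the completion and capping with $c_r$ of the universal quotient bundle before pushing forward; one then checks independence of the chosen extension using the localization sequence. You name this fix, so the injectivity half is sound in outline. In short: right road map, matching the source the paper cites, with the central rational-function lemma and the well-definedness check for the retraction still to be written out.
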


\begin{definition}[\text{\cite[Definition 3.3]{Ful}}]\label{def-2.8}
	Let $s$ denote the zero section of the bundle $E$ above. Hence $\pi \circ s = \textnormal{Id}_Y$. Then there exist \textit{Gysin homomorphisms}:
	\begin{align*}
	s^*:\CH_k(E) &\rightarrow \CH_{k-r}(Y), \\
	s^*(W) &:= (\pi^*)^{-1}(W)
	\end{align*}
	where $r=rank(E)$.
\end{definition}

(We make a small remark that $s^* \neq \pi_*$).

\begin{lemma}[\text{\cite[Example 3.3.2]{Ful}}]\label{lem-2.8}
	If $s$ is the zero section of a vector bundle $E$ of rank $r$ on $Y$, then
	\[s^*s_*(Z) = c_r(E)\cap Z \,\, \textnormal{for all}\,\, Z\in \CH_*(Y).\]
\end{lemma}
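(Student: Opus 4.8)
The plan is to prove the equivalent, and more transparent, statement that
\[
s_*(Z) = \pi^*\bigl(c_r(E)\cap Z\bigr)\qquad\text{in } \CH_*(E).
\]
Since $\pi^*$ is an isomorphism by Theorem~\ref{thm-2.7} and $s^*$ is defined as $(\pi^*)^{-1}$ in Definition~\ref{def-2.8}, applying $(\pi^*)^{-1}$ to both sides of this identity recovers the asserted $s^*s_*(Z)=c_r(E)\cap Z$. So it suffices to produce the displayed equality for every $Z$.

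By linearity I would first reduce to the case $Z=[V]$ for a subvariety $\iota\colon V\hookrightarrow Y$, and then to the fundamental class itself. Writing $\tilde\iota\colon E|_V\hookrightarrow E$ for the induced closed immersion and $\pi_V\colon E|_V\to V$ for the restricted bundle, the square relating $E|_V,E,V,Y$ is Cartesian, so flat pullback commutes with proper pushforward, giving $\pi^*\iota_*=\tilde\iota_*\pi_V^*$. Combined with the projection formula $c_r(E)\cap\iota_*\alpha=\iota_*(c_r(\iota^*E)\cap\alpha)$ and the functoriality $s_{Y,*}\iota_*=\tilde\iota_*s_{V,*}$ of the zero section, the general case follows from the single identity
\[
s_*[Y]=\pi^*c_r(E)
\]
applied to $V$ with the bundle $E|_V$. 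Thus the whole statement reduces to this one equality for an arbitrary variety $Y$ and bundle $E$.

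To establish $s_*[Y]=\pi^*c_r(E)$, I would pass to the projective completion $q\colon\bar E=\mathbb{P}(E\oplus\mathcal{O}_Y)\to Y$, in which $E$ sits as the open complement of the hyperplane at infinity $\mathbb{P}(E)$, via an open immersion $j\colon E\hookrightarrow\bar E$. The composite $\mathcal{O}(-1)\hookrightarrow q^*(E\oplus\mathcal{O}_Y)\twoheadrightarrow q^*E$ is a section of the rank-$r$ bundle $q^*E\otimes\mathcal{O}(1)$ whose zero scheme is exactly the zero section $s(Y)$, and it is a regular section, its zero locus having the expected codimension $r$. Hence $[s(Y)]=c_r\bigl(q^*E\otimes\mathcal{O}(1)\bigr)\cap[\bar E]$. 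Over the open set $E$ the tautological line has nonzero $\mathcal{O}_Y$-component, so $\mathcal{O}(1)$ trivializes there; consequently $j^*\bigl(c_r(q^*E\otimes\mathcal{O}(1))\bigr)=c_r(\pi^*E)=\pi^*c_r(E)$, while $j^*[s(Y)]=s_*[Y]$ because $s(Y)\subset E$. Restricting the previous identity along $j$ then yields $s_*[Y]=\pi^*c_r(E)$, as desired.

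The main obstacle is this middle step: identifying the class of the zero section with a top Chern class. It rests on checking that the tautological section above is regular with zero scheme precisely $s(Y)$ — a local computation in the fibres of $\bar E$ — together with the standard fact that the top Chern class of a vector bundle is represented by the zero scheme of any regular section. The surrounding reductions, though they require the compatibilities of Gysin maps, proper pushforward, flat pullback and Chern-class operations, are routine once those functorialities are in place.
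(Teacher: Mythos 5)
The paper does not actually prove this lemma: it is quoted directly from \cite[Example 3.3.2]{Ful}, so there is no internal argument to measure your proof against. Your argument is correct and is essentially the classical one. The reductions in your second paragraph are all legitimate: linearity, the compatibility $\pi^*\iota_*=\tilde\iota_*\pi_V^*$ over the Cartesian square (\cite[Proposition 1.7]{Ful}), the projection formula for Chern classes, and $s_{Y,*}\iota_*=\tilde\iota_*s_{V,*}$ together do reduce everything to $s_*[Y]=\pi^*(c_r(E)\cap[Y])$. For the key step, your local check is right: in a trivialization over which $E\cong Y\times\mathbb{A}^r$, the composite $\mathcal{O}(-1)\hookrightarrow q^*(E\oplus\mathcal{O}_Y)\twoheadrightarrow q^*E$ becomes the tuple of fibre coordinates, a regular sequence over any base, with scheme-theoretic zero locus exactly $s(Y)$ and no zeros along $\mathbb{P}(E)$ at infinity; and $j^*\mathcal{O}(1)$ is indeed trivial on $E$ because the tautological line there projects isomorphically onto the $\mathcal{O}_Y$-summand. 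The one input carrying real weight is the identity $[Z(\sigma)]=c_r(F)\cap[\overline{E}]$ for a regular section; this is \cite[Example 3.2.16]{Ful}, proved there via the localized top Chern class of \S 14.1 and not via Example 3.3.2, so there is no circularity, but you should cite it explicitly since it is the entire substance of the middle step. For comparison, Fulton's own derivation also passes to $\mathbb{P}(E\oplus\mathcal{O}_Y)$ but works with the universal rank-$r$ quotient bundle and the basis of $\CH_*(\mathbb{P}(E\oplus\mathcal{O}_Y))$ from his Proposition 3.3, reading off the answer by the Whitney formula; your version trades that computation for the regular-section fact. Either route is acceptable.
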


\begin{proposition}[\text{\cite[Proposition 6.7(a)]{Ful}}]\label{prop-2.10}
	Let $Y$ be a nonsingular variety, and $X \overset{i}{\hookrightarrow} Y$ be a nonsingular closed subvariety of codimension $d$, with normal bundle $N$. Let $\widetilde{Y}$ be the blow-up of $Y$ along $X$, and $\widetilde{X}$ be its exceptional divisor. We have a fibre square:
	
	\begin{align*}
		\xymatrix{\widetilde{X} \ar[d]_{g} \ar@{^{(}->} [r]^{j} & \widetilde{Y} \ar [d]^{f} \\
			X \ar@{^{(}->} [r]^{i} & Y
		} 
	\end{align*}
	Let $E:= g^*(N)/\mathcal{O}_N (-1)$ be the Excess normal bundle. Then for all $Z\in \CH_k(X),$
	\[f^*i_*(Z)=j_*(c_{d-1}(E)\cap g^*(Z)).\]
\end{proposition}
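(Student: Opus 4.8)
The plan is to reduce the statement, by deformation to the normal cone, to the case where $Y$ is the total space of the normal bundle $N$ and $i$ is its zero section, where both sides can be computed by hand. First I would record the two geometric inputs valid for the blow-up of a nonsingular variety along a nonsingular center: the exceptional divisor is the projective bundle $g\colon \widetilde{X}=\mathbb{P}(N)\to X$, and the normal bundle of $j\colon \widetilde{X}\hookrightarrow \widetilde{Y}$ is the tautological line bundle $\mathcal{O}_N(-1)$. Combined with the defining sequence $0\to \mathcal{O}_N(-1)\to g^*N\to E\to 0$ of the excess bundle, so that $\operatorname{rank} E=d-1$, the Whitney formula gives
\[
c_d(g^*N)=c_1(\mathcal{O}_N(-1))\cdot c_{d-1}(E),
\]
since $c_d(E)=0$. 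This identity in $\CH^{\Q}(\widetilde{X})$, after capping with $g^*Z$, is what will match the two sides; only the left-hand side $f^*i_*(Z)$ will require the reduction to the normal-bundle model.

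Next I would carry out the computation in that model, where $p\colon N\to X$ is the bundle projection and $i=s$ is the zero section. Here $\widetilde{Y}$ is the total space of $\mathcal{O}_N(-1)$, with bundle projection $\pi\colon \widetilde{Y}\to \widetilde{X}$ whose zero section is exactly $j$, and $p\circ f=g\circ\pi=:q$. Applying Lemma \ref{lem-2.8} and Definition \ref{def-2.8} to the zero section $s=i$ of the rank-$d$ bundle $N$ gives $s^*i_*(Z)=s^*s_*(Z)=c_d(N)\cap Z$, and since $s^*=(p^*)^{-1}$ this yields $i_*(Z)=p^*(c_d(N)\cap Z)$. Because $Y=N$ is nonsingular, $f^*$ is defined, and functoriality of pullback together with flatness of $q$ (so that $q^*=\pi^*g^*$) gives
\[
f^*i_*(Z)=q^*\big(c_d(N)\cap Z\big)=\pi^*\big(c_d(g^*N)\cap g^*Z\big).
\]

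For the right-hand side I would apply the same two results to the zero section $j$ of the line bundle $\mathcal{O}_N(-1)$: from $j^*j_*(W)=c_1(\mathcal{O}_N(-1))\cap W$ and $j^*=(\pi^*)^{-1}$ one obtains $j_*(W)=\pi^*\big(c_1(\mathcal{O}_N(-1))\cap W\big)$, whence
\[
j_*\big(c_{d-1}(E)\cap g^*Z\big)=\pi^*\big(c_1(\mathcal{O}_N(-1))\cap c_{d-1}(E)\cap g^*Z\big).
\]
Comparing this with the previous display and invoking the Whitney identity of the first paragraph, both sides equal $\pi^*\big(c_d(g^*N)\cap g^*Z\big)$; since $\pi^*$ is injective by Theorem \ref{thm-2.7}, the formula holds in the model.

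Finally I would upgrade from the model to the general blow-up. The hard part is precisely this reduction: one forms the deformation to the normal cone of $X$ in $Y$ (which, as $X,Y$ are nonsingular, is the normal bundle $N$), blows up the total family compatibly, and checks that both $f^*i_*(Z)$ and $j_*(c_{d-1}(E)\cap g^*Z)$ are compatible with the specialization homomorphism to the special fibre. Because the exceptional divisor $\mathbb{P}(N)$, the line bundle $\mathcal{O}_N(-1)$ and the excess sequence are all preserved under this specialization, the equality verified in the normal-bundle model propagates back to the original situation, giving the desired formula. The main obstacle is thus not the Chern-class bookkeeping, which is formal, but the verification that one and the same specialization argument applies simultaneously to both sides of the identity.
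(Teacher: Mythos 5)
This statement is quoted verbatim from Fulton (Proposition 6.7(a)) and the paper offers no proof of its own, so there is nothing to compare against except the standard argument in the reference. Judged on its own terms, your model computation is correct and complete: the identification of the blow-up of $N$ along its zero section with the total space of $\mathcal{O}_N(-1)$ over $\mathbb{P}(N)$, the use of the self-intersection formula $s^*s_*(Z)=c_d(N)\cap Z$ together with $s^*=(p^*)^{-1}$ to write $i_*(Z)=p^*(c_d(N)\cap Z)$, the analogous computation for the zero section $j$ of $\mathcal{O}_N(-1)$, and the Whitney identity $c_d(g^*N)=c_1(\mathcal{O}_N(-1))\cdot c_{d-1}(E)$ (using $c_d(E)=0$ since $\operatorname{rank}E=d-1$) all check out, and they correctly reduce the model case to the injectivity of $\pi^*$.

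The genuine gap is the final paragraph: the reduction from the general blow-up to the normal-bundle model is asserted, not proved, and it is precisely where all the content of the proposition lives. The difficulty is not merely bookkeeping: the classes $f^*i_*(Z)$ in the general case and in the model are Gysin pullbacks along two different morphisms on two different total spaces, so one must blow up the entire deformation space $M=\mathrm{Bl}_{X\times\{\infty\}}(Y\times\mathbb{P}^1)$ along $X\times\mathbb{P}^1$, analyze the components of its special fibre, and prove that the specialization homomorphism intertwines the two pullbacks $f^*$ and the two pushforwards $j_*$ simultaneously; none of this is automatic, and "the excess sequence is preserved under specialization" is a statement that itself requires the identification of the normal cone of the blown-up family. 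The efficient way to close the gap is to apply the excess intersection formula (Fulton, Theorem 6.3) to the blow-up square: $i$ is a regular embedding of codimension $d$, $j$ is a regular embedding of codimension $1$ (a Cartier divisor), and the excess bundle of the square is exactly $E=g^*N/\mathcal{O}_N(-1)$ of rank $d-1$; that theorem packages the deformation-to-the-normal-cone argument once and for all and yields the formula directly, which is essentially how the result is derived in the cited reference. As written, your proposal proves the easy half and defers the hard half.
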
	
\section{Relation between chow groups of 1-cycles of moduli of parabolic bundles for arbitrary generic weights}
	
%Let $W$ be the set of all possible weights, which is a product of simplices, one for each parabolic point. There are finitely many hyperplanes which intersect $W$ in such a way that in each connected component of the complement of hyperplanes, the notion of stability and semistability coincide. We call these connected components as \textit{chambers}, and a weight belonging to a chamber is called \textit{generic}.
	
Fix a set $S$ of parabolic points, rank $r$ and determinant $\mathcal{L}$. We assume that we are working with $full$ flags, i.e. $m_{p,i} =1 \forall p,i$. Consider $V_m$, the set of weights compatible with $m$, as in section 2.4. Recall that $V_m$ is cut out by finitely many walls. Moreover, as the flags are full, $V_m$ contains a generic weight by \cite[Proposition 3.2]{BY}. \\
Let $ \alpha ,\beta \in V_m$ be two generic weights in adjacent chambers separated by a single wall. Let $ H $ be the hyperplane separating $ \alpha $ and $ \beta $. Let $ \gamma $ be the weight lying on $ H $ and the line joining $ \alpha $ and $ \beta $. Then $\Malpha$ and $ \mathcal{M}_{\beta} $ are nonsingular projective varieties, while $ \Mgamma $ is normal projective variety, with the singular locus $ \Sigmagamma \subset \Mgamma$ given by the class of strictly semistable bundles. Note that since $ \gamma $ lies on only one hyperplane in $W$, $ \Sigmagamma $ is nonsingular (\cite[Section 3.1]{BH}).\newline
	
Let us recall the following theorem:

\begin{theorem}[\text{\cite[Theorem 3.1]{BY}}]
	There are canonical projective morphisms
	\[
	\xymatrix{
		\Malpha \ar[rd]_{\phialpha}
		& 
		&\Mbeta \ar[ld]^{\phibeta} \\
		&\Mgamma
	}
	\] 
	
	so that: a) $ \phialpha $ and $ \phibeta $ are isomorphisms along $ \Mgamma \setminus \Sigmagamma $,\newline
	\hspace*{10ex} b) along $ \Sigmagamma $, $ \phialpha $ and $ \phibeta $ are $ \Pnalpha $ and $ \Pnbeta $-bundles respectively,\newline
	\hspace*{6.5ex}and  c) $ codim \Sigmagamma = 1+ n_{\alpha} + n_{\beta} $.
	
\end{theorem}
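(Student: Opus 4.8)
The plan is to realise all three moduli spaces as GIT quotients of a single parameter scheme (a flag-Quot scheme $R$ carrying a $\mathrm{PGL}$-action) under three linearisations $L_\alpha, L_\beta, L_\gamma$, and to read off the wall-crossing geometry from the variation of these linearisations. First I would analyse the wall $H$ itself: because $\gamma$ lies on \emph{exactly one} hyperplane, a $\gamma$-semistable parabolic bundle $E_*$ fails to be $\gamma$-stable in only one numerical way, namely it admits a parabolic subbundle $F_*$ of a fixed type with $\mathrm{Par}\mu_\gamma(F_*) = \mathrm{Par}\mu_\gamma(E_*)$; writing $G_* := E_*/F_*$, both $F_*$ and $G_*$ are then $\gamma$-stable. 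Hence the graded object is $F_* \oplus G_*$ and $\Sigmagamma$ is identified with the space of pairs $(F_*,G_*)$ (a product of moduli of the two pieces, up to a finite quotient when they share the same numerical type), which already explains why $\Sigmagamma$ is nonsingular. The maps $\phialpha$ and $\phibeta$ are then the canonical $S$-equivalence morphisms: an $\alpha$-stable (resp.\ $\beta$-stable) bundle is automatically $\gamma$-semistable, hence determines its $\gamma$-$S$-equivalence class. Projectivity, and the fact that they are isomorphisms over $\Mgamma \setminus \Sigmagamma$, follow from the variation-of-GIT picture, since over the stable locus all three linearisations give the same quotient.

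Next I would identify the fibres over a point $[F_* \oplus G_*] \in \Sigmagamma$. On the $\alpha$ side the bundles mapping to this point are exactly the \emph{nonsplit} extensions
\[
0 \to F_* \to E_* \to G_* \to 0,
\]
the direction of the extension being dictated by which chamber $\alpha$ occupies; two such extensions yield isomorphic $E_*$ precisely when they differ by a scalar, so the fibre is $\mathbb{P}\bigl(\mathrm{Ext}^{1}_{\mathrm{par}}(G_*,F_*)\bigr) \cong \Pnalpha$ with $n_\alpha = \dim\mathrm{Ext}^{1}_{\mathrm{par}}(G_*,F_*) - 1$. On the $\beta$ side the roles of sub and quotient are interchanged, giving $\mathbb{P}\bigl(\mathrm{Ext}^{1}_{\mathrm{par}}(F_*,G_*)\bigr) \cong \Pnbeta$. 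To promote this fibrewise statement to the bundle statement (b), I would construct the relative parabolic $\mathrm{Ext}$-sheaf over $\Sigmagamma$ from the universal families on the two factors and show it is locally free: since $X$ is a curve the parabolic $\mathrm{Ext}^i$ vanish for $i \ge 2$, and $\mathrm{Hom}_{\mathrm{par}}(G_*,F_*) = 0$ because $F_*,G_*$ are nonisomorphic $\gamma$-stable bundles of equal slope, so the relevant $\mathrm{Ext}^1$ has constant rank and its derived pushforward is a vector bundle, whose projectivisation realises $\phialpha^{-1}(\Sigmagamma)$ as a $\Pnalpha$-bundle, and symmetrically for $\phibeta$.

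Finally, for the codimension formula (c) I would run a parabolic Riemann--Roch bookkeeping in the fixed-determinant (trace-free) setting. The key input is the additivity of the parabolic Euler pairing,
\[
\chi_{\mathrm{par}}(E,E) = \chi_{\mathrm{par}}(F,F) + \chi_{\mathrm{par}}(G,G) + \chi_{\mathrm{par}}(F,G) + \chi_{\mathrm{par}}(G,F),
\]
combined with $\dim\Malpha = \dim\Mgamma$, the description of $\dim\Sigmagamma$ as the dimension of the space of pairs $(F_*,G_*)$, and the vanishing $\mathrm{Hom}_{\mathrm{par}}(F_*,G_*) = \mathrm{Hom}_{\mathrm{par}}(G_*,F_*) = 0$. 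Accounting for the $\mathrm{Hom}$- and $\mathrm{Ext}^1$-terms (the trace-free and Picard corrections to each moduli dimension appearing symmetrically and cancelling) collapses $\dim\Mgamma - \dim\Sigmagamma$ to
\[
\mathrm{codim}\,\Sigmagamma = \dim\mathrm{Ext}^{1}_{\mathrm{par}}(G_*,F_*) + \dim\mathrm{Ext}^{1}_{\mathrm{par}}(F_*,G_*) - 1 = (n_\alpha+1) + (n_\beta+1) - 1 = 1 + n_\alpha + n_\beta,
\]
the decisive ``$-1$'' being exactly the discrepancy $1 - 2$ between the one-dimensional automorphisms of the simple bundle $E_*$ and the two-dimensional diagonal automorphisms of the split object $F_* \oplus G_*$.

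The step I expect to be the main obstacle is part (b): producing an honest vector bundle on $\Sigmagamma$ whose projectivisation is the exceptional locus, rather than merely a fibrewise family of projective spaces. This demands a genuine relative $\mathrm{Ext}$-sheaf construction over $\Sigmagamma$, with control of base change and local freeness, and, when $F_*$ and $G_*$ have the same numerical type, descent of the construction through the $\mathbb{Z}/2$ that swaps the two summands (so that the ``bundle'' is only a bundle after passing to a double cover). By contrast, establishing the projectivity of $\phialpha,\phibeta$ and that they restrict to isomorphisms off $\Sigmagamma$ is comparatively routine once the GIT set-up is fixed.
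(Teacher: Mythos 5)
This statement is not proved in the paper at all: it is quoted verbatim from \cite[Theorem 3.1]{BY} (equivalently \cite[Theorem 3.1]{BH}) and used as a black box, so there is no internal proof to compare yours against. Your outline is, however, essentially the argument of those cited sources: realise the three moduli spaces as GIT quotients under varying linearisations, identify $\Sigmagamma$ with the (ordered) product of the two smaller moduli of the graded pieces $F_*$, $G_*$, describe the fibres of $\phialpha$ and $\phibeta$ as $\mathbb{P}(\mathrm{Ext}^1_{\mathrm{par}}(G_*,F_*))$ and $\mathbb{P}(\mathrm{Ext}^1_{\mathrm{par}}(F_*,G_*))$, and get the codimension from bilinearity of the parabolic Euler pairing together with the vanishing of the parabolic $\mathrm{Hom}$'s and $\mathrm{Ext}^2$'s. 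One small remark: because $\gamma$ lies on a single wall, the destabilising subobject has a \emph{fixed} numerical type (a prescribed subset of weights and a prescribed rank/degree), so the pair $(F_*,G_*)$ is canonically ordered and $\Sigmagamma$ is an honest product rather than a symmetric quotient; the $\mathbb{Z}/2$-descent issue you flag at the end does not actually arise here, which is consistent with the paper's later use (Lemma \ref{lem-3.3}) of $\Sigmagamma$ being a product of two rational moduli. The genuinely technical point you correctly isolate --- local freeness and base change for the relative parabolic $\mathrm{Ext}^1$-sheaf built from the universal families, so that the exceptional loci are projectivisations of actual vector bundles --- is exactly where the cited references do the real work.
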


Since $\Sigmagamma$ is nonsingular and $\phialpha^{-1}(\Sigmagamma), \phibeta^{-1}(\Sigmagamma)$ are projective bundles, they are nonsingular closed subvarieties of $\Malpha,\Mbeta$ respectively.
	
Let $ \mathcal{N} := \Malpha \underset{\Mgamma}{\times}\Mbeta$.
Let $ \psialpha $ and $ \psibeta $ denote the natural maps from $ \mathcal{N}$ to $ \Malpha $ and $ \Mbeta $ respectively. Then according to the discussion in the end of section 1 in \cite{BH},
$\mathcal{N}$ is the common blow-up with exceptional divisor a $(\Pnalpha \times \Pnbeta)$-bundle over $ \Sigmagamma $.
	
Call the exceptional divisor $E$, with $ j: E\hookrightarrow \mathcal{N} $ the inclusion.
	
We have the following diagram:

\[
\xymatrixcolsep{4pc}\xymatrix{
	& E\ar@{^{(}->}[r]^{j} \ar[ld] \ar[rd]	&\mathcal{N} \ar[ld]^(.2){\psialpha} \ar[rd]^{\psibeta} \\
	\phialpha^{-1}(\Sigmagamma) \ar@{^{(}->}[r] \ar[rd]
	& \Malpha \ar[rd]^(.75){\phialpha}
	&\phibeta^{-1}(\Sigmagamma) \ar@{^{(}->}[r] \ar[ld]
	&\Mbeta \ar[ld]^{\phibeta}\\
	& \Sigmagamma \ar@{^{(}->}[r]
	& \Mgamma
}
\]
	
Here $E$ is a $\Pnbeta$-bundle over $\phibeta^{-1}(\Sigmagamma)$ via $\psialpha|_E$, and a $\Pnalpha$-bundle over $\phibeta^{-1}(\Sigmagamma)$ via $\psibeta|_E$.
	
\begin{remark}\label{rem-1}
	From the diagram above, we note that $E\cong \phialpha^{-1}(\Sigmagamma)\underset{\Sigmagamma}{\times} \phibeta^{-1}(\Sigmagamma)$, since
		
	\begin{align*}
	E = \mathcal{N}\underset{\Mbeta}{\times} \phibeta^{-1}(\Sigmagamma) &= (\Malpha\underset{\Mgamma}{\times}\Mbeta)\underset{\Mbeta}{\times}\phibeta^{-1}(\Sigmagamma) \\
	&\cong \Malpha\underset{\Mgamma}{\times}\phibeta^{-1}(\Sigmagamma) \\
	&\cong \phialpha^{-1}(\Sigmagamma)\underset{\Sigmagamma}{\times}\phibeta^{-1}(\Sigmagamma). \hspace{15ex} [\because \phibeta^{-1}(\Sigmagamma) \,\,\textnormal{maps\,\,to}\,\, \Sigmagamma]
\end{align*}
		
\end{remark}

\vspace{5ex}
	
\begin{lemma}\label{lem-3.3}
	$\phialpha^{-1}(\Sigmagamma)$ and $\phibeta^{-1}(\Sigmagamma)$ are rational varieties (i.e. birational to $\mathbb{P}^n$ for some n); hence $\CH_0^{\mathbb{Q}}(\phialpha^{-1}(\Sigmagamma)) \cong \mathbb{Q}\cong \CH_0^{\mathbb{Q}}(\phibeta^{-1}(\Sigmagamma))$.
\end{lemma}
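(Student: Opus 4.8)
The plan is to reduce the assertion to the rationality of the base $\Sigmagamma$ and then propagate it through the projective bundle structure coming from the preceding theorem (\cite[Theorem 3.1]{BY}). First I would record that, by part (b) of that theorem, the restricted maps $\phialpha\colon \phialpha^{-1}(\Sigmagamma)\to\Sigmagamma$ and $\phibeta\colon \phibeta^{-1}(\Sigmagamma)\to\Sigmagamma$ are a $\Pnalpha$-bundle and a $\Pnbeta$-bundle respectively. These are projectivizations of genuine vector bundles (the sheafified space of extensions of one graded summand by the other over $\Sigmagamma$), hence Zariski-locally trivial, so each preimage is birational to $\Sigmagamma\times\Pnalpha$ (resp. $\Sigmagamma\times\Pnbeta$). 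Since a Zariski-locally trivial projective bundle over a rational variety is again rational, it suffices to show that $\Sigmagamma$ is rational.

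To analyse $\Sigmagamma$ I would use its moduli interpretation. As $\gamma$ lies on a single wall, a point of $\Sigmagamma$ is the $S$-equivalence class of a $\gamma$-strictly-semistable parabolic bundle whose associated graded object is a direct sum $E'_*\oplus E''_*$ of two parabolic stable bundles of strictly smaller rank, with equal parabolic slope and $\det E'\otimes\det E''=\mathcal{L}$. Thus $\Sigmagamma$ is identified, up to the finite symmetry exchanging the two factors of equal numerical type, with a moduli space of such pairs, i.e. a fibre product of two lower-rank parabolic moduli spaces over the determinant. The plan is to deduce its rationality from the rationality theorems for moduli of parabolic bundles established in \cite{BY}.

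Granting this, both $\phialpha^{-1}(\Sigmagamma)$ and $\phibeta^{-1}(\Sigmagamma)$ are smooth (they are nonsingular closed subvarieties of $\Malpha$ and $\Mbeta$, as already noted), projective and rational. Because $\CH_0$ is a birational invariant among smooth projective varieties and $\CH_0^{\Q}(\mathbb{P}^N)\cong\Q$, this yields $\CH_0^{\Q}(\phialpha^{-1}(\Sigmagamma))\cong\Q\cong\CH_0^{\Q}(\phibeta^{-1}(\Sigmagamma))$, as required.

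The step I expect to be the main obstacle is the rationality of $\Sigmagamma$ itself, where the delicate issue is the determinant of the sub-object: although the product $\det E'\otimes\det E''$ is pinned to $\mathcal{L}$, each factor a priori moves in a Picard torsor, so one must verify that the fibre-product description is genuinely rational and is not obstructed by a Jacobian factor. If that cannot be secured, a safer route which still suffices for Theorem \ref{thm-3.10} is to bypass rationality and apply the projective bundle formula \cite[Theorem 9.25]{Voi} directly: only the relative-dimension-zero part of the fibre contributes to $\CH_0$, giving the canonical isomorphisms $\CH_0^{\Q}(\phialpha^{-1}(\Sigmagamma))\cong\CH_0^{\Q}(\Sigmagamma)\cong\CH_0^{\Q}(\phibeta^{-1}(\Sigmagamma))$ irrespective of the rationality of $\Sigmagamma$.
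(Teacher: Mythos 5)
Your main argument is essentially the paper's proof: $\Sigmagamma$ is identified (via equation (5) of \cite{BH}) with a product of lower-rank parabolic moduli spaces, which are rational by \cite[Theorem 6.1]{BY}, so the projective bundles $\phialpha^{-1}(\Sigmagamma)$ and $\phibeta^{-1}(\Sigmagamma)$ over it are rational, and birational invariance of $\CH_0$ of smooth projective varieties gives the second assertion. One caveat on your proposed fallback: the isomorphisms $\CH_0^{\Q}(\phialpha^{-1}(\Sigmagamma))\cong \CH_0^{\Q}(\Sigmagamma)\cong \CH_0^{\Q}(\phibeta^{-1}(\Sigmagamma))$ alone would \emph{not} suffice for Theorem \ref{thm-3.10}, because Remark \ref{vectfact} and Proposition \ref{prop-3.9} use that these groups are one-dimensional (so that a nonzero map between them is automatically an isomorphism); that one-dimensionality is exactly what the rationality of $\Sigmagamma$ provides, so the rationality step cannot be bypassed without restructuring the later argument.
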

	
\begin{proof}	
	By equation (5) in \cite{BH}, $ \Sigmagamma $ is the product of two smaller dimensional moduli, which are rational (by \cite[Theorem 6.1]{BY}), so $ \Sigmagamma $ is itself rational.
		
	Since $ \phialpha^{-1}(\Sigmagamma) $ and $ \phibeta^{-1}(\Sigmagamma) $ are projective bundles over $ \Sigmagamma $, they are also rational. This proves the first assertion.
		
	Moreover, by \cite[Example 16.1.11]{Ful}, the Chow groups of 0-cycles is a birational invariant; and $\CH_0(\mathbb{P}^n) \cong \mathbb{Z} \,\forall\,n$, so we get the second assertion as well.
\end{proof}
	
Recall the fibre diagram from Remark \ref{rem-1}: \newline
\begin{align*}
\xymatrix{E \ar[d]^{\psialpha|
	_E} \ar[r]_{\psibeta |_E}
	&\phibeta^{-1}(\Sigmagamma) \ar[d]_{\phibeta}^{\substack{\mathbb{P}^{n_\beta}-\\bundle}} \\
	\phialpha^{-1}(\Sigmagamma) \ar[r]^(.6){\phialpha}_(.6){\substack{\mathbb{P}^{n_\alpha}- \\ bundle}} 
	&\Sigmagamma
	}
\end{align*}
\newline
Therefore, if we choose a point $p\in \Sigmagamma$, then by base changing to $\{p\}$, the diagram above transforms to \newline
	
\begin{align*}
\xymatrix{{\Pnalpha \times \Pnbeta} \ar[d]_{p_1} \ar[r]^{p_2} 
	&{ \Pnbeta \cong \phibeta^{-1}(p)} \ar[d] \\
	{\Pnalpha \cong \phialpha^{-1}(p) } \ar[r] &\{p\}
	}
\end{align*}
	
where $p_1,p_2$ denote the first and second projections respectively.
%\[\phialpha^{-1}(p)\cong \Pnalpha, \phibeta^{-1}(p)\cong \Pnbeta, \psialpha^{-1}(\phialpha^{-1}(p)) = \psibeta^{-1}(\phibeta^{-1}(p))\cong \Pnalpha \times \Pnbeta,\]
%and $\psialpha$ , $\psibeta$ will correspond to the first and second projections respectively.
	
Let $ \phialpha^{-1}(p)\cong \Pnalpha \overset{i_\alpha}{\hookrightarrow} \phialpha^{-1}(\Sigmagamma), \psialpha^{-1}(\phialpha^{-1}(p)) \cong \Pnalpha\times\Pnbeta\ \overset{\widetilde{\imath}}{\hookrightarrow} E$ denote the inclusions. We have the fibre diagram \newline
	
\begin{align*}
\xymatrixcolsep{2pc}\xymatrix{{\Pnalpha \times \Pnbeta} \ar[r]^{\cong} \ar[d]_{p_1} 
	& {\psialpha^{-1}(\phialpha^{-1}(p))} \ar@{^{(}->}[r]^(.6){\widetilde{\imath}} \ar[d]
	& E \ar[d]^{\psialpha|_E} \\
	{\Pnalpha} \ar[r]^{\cong}
	& {\phialpha^{-1}(p)} \ar@{^{(}->}[r]^{\imath_\alpha}
	& {\phialpha^{-1}(\Sigmagamma)}
}
\end{align*}
	
Choose a point $x\in \Pnalpha \cong \phialpha^{-1}(p)$. In the following, under slight abuse of notation, we will think of the element $[x]\in \CH_0^{\Q}(\Pnalpha)$ as an element of  $ \CH_0^{\Q}(\phialpha^{-1}(p))$, and we will think of the element $[\{x\} \times \Pnbeta] \in \CH_{n_{\beta}}^{\Q}(\Pnalpha \times \Pnbeta)$ as an element of $ \CH_{n_{\beta}}^{\Q}(\psialpha^{-1}(\phialpha^{-1}(p)))$.\newline
	
\begin{lemma}\label{lem-3.4}
	$(\psialpha|_E)^*((\imath_{\alpha})_*[x]) = \widetilde{\imath}_*[\{x\}\times \Pnbeta]$
\end{lemma}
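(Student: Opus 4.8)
The plan is to recognize the asserted identity as a single instance of the compatibility between flat pull-back and proper push-forward across the Cartesian square displayed just above the statement. Abbreviate the four morphisms of that square by $f := \psialpha|_E : E \to \phialpha^{-1}(\Sigmagamma)$ (right vertical), $f'$ for the left vertical morphism $\psialpha^{-1}(\phialpha^{-1}(p)) \to \phialpha^{-1}(p)$, and $g := \imath_\alpha$, $g' := \widetilde{\imath}$ for the two horizontal closed immersions. In this square the horizontal maps $g,g'$ are proper, being closed immersions, while $f$ is flat, being a $\Pnbeta$-bundle, and $f'$ is flat as its restriction over the closed subvariety $\phialpha^{-1}(p)\subseteq \phialpha^{-1}(\Sigmagamma)$ (projective-bundle projections are flat, and flatness is preserved under base change). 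Hence the base-change formula for a fibre square with flat vertical and proper horizontal arrows, namely $f^* g_* = g'_* f'^*$, applies; see \cite[Proposition 1.7]{Ful}.

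Applying this equality to the class $[x]\in \CH_0^{\Q}(\phialpha^{-1}(p))$ gives
\[
(\psialpha|_E)^*\bigl((\imath_\alpha)_*[x]\bigr) = f^* g_* [x] = g'_* f'^*[x] = \widetilde{\imath}_*\bigl(f'^*[x]\bigr),
\]
so it only remains to identify the flat pull-back $f'^*[x]$. Under the isomorphisms of the leftmost square, $f'$ is the first projection $p_1 : \Pnalpha \times \Pnbeta \to \Pnalpha$, which is smooth, hence flat, of relative dimension $n_\beta$. The flat pull-back of the reduced point $\{x\}$ is by definition the fundamental cycle of the scheme-theoretic preimage $p_1^{-1}(x) = \{x\}\times \Pnbeta$, which is reduced; therefore $f'^*[x] = [\{x\}\times \Pnbeta]$ in $\CH_{n_\beta}^{\Q}(\Pnalpha\times\Pnbeta)\cong \CH_{n_\beta}^{\Q}(\psialpha^{-1}(\phialpha^{-1}(p)))$. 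Substituting this into the previous display yields $\widetilde{\imath}_*[\{x\}\times \Pnbeta]$, which is exactly the right-hand side of the claim.

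The only points I would verify explicitly — and these are the sole possible obstacles — are that the displayed square is genuinely Cartesian, so that base change is legitimate, and that the two vertical morphisms are flat; both are immediate here, since the square arises by restricting the projective bundle $\psialpha|_E$ over the closed subvariety $\phialpha^{-1}(p)$, and the flat pull-back of a reduced point along a smooth projective-bundle projection is the reduced fibre. The passage to $\Q$-coefficients is harmless, as flat pull-back and proper push-forward are defined integrally and commute with $-\otimes_{\mathbb{Z}}\Q$. I expect no serious difficulty: the content is entirely the functorial compatibility of \cite[Proposition 1.7]{Ful}, with the remainder being the routine identification of a flat pull-back of a point with a fibre class.
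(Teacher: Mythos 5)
Your proposal is correct and follows exactly the paper's own argument: an application of the base-change compatibility of flat pull-back and proper push-forward from \cite[Proposition 1.7]{Ful}, with $\psialpha|_E$ flat as a projective-bundle map and $\imath_\alpha$ proper as a closed immersion. You simply spell out the final identification $f'^*[x]=[\{x\}\times\Pnbeta]$, which the paper leaves implicit.
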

	
\begin{proof}
	This follows from \cite[Proposition 1.7]{Ful}, since $\psialpha|_E$ is flat, being a projective bundle map, and $\imath_\alpha$ is proper, being a closed immersion.
\end{proof}
	
Now, since $\mathcal{N}$ is the blow-up over $\Malpha$ along $\phialpha^{-1}(\Sigmagamma)$, hence  by \cite[Theorem 9.27]{Voi} there is an isomorphism of Chow groups:
	
\begin{align}
\CH_0^{\Q}(\phialpha^{-1}(\Sigmagamma)) &\oplus \CH_1^{\Q}(\Malpha) \xrightarrow[\sim]{g_{\alpha}} \CH_1^{\Q}(\mathcal{N}) \label{3.1}\\
	\textnormal{given\,\,by}\quad\quad (W_0\,\,&, \,\,W_1) \longmapsto j_*(c_1(h_\alpha)^{n_{\beta}-1}\cap (\psialpha|_E)^*(W_0)) + \psialpha^*(W_1) \label{3.2}
\end{align}
	
where $h_\alpha:= \mathcal{O}_E(1)$ ($E$ thought of as a  $\Pnbeta$-bundle over $ \phialpha^{-1}(\Sigmagamma) $), and $\cap$ denotes the intersection product.\newline
	
\vspace{1ex}
	
Similarly, there exists an isomorphism defined similiarly to $g_\alpha$ above:
	
\begin{align}
\CH_0^{\Q}(\phibeta^{-1}(\Sigmagamma)) &\oplus \CH_1^{\Q}(\Mbeta) \,\,\xrightarrow[\sim]{g_{\beta}} \,\, \CH_1^{\Q}(\mathcal{N}) \label{3.3}\\
\textnormal{given\,\,by}\quad\quad (Z_0\,\,&, \,\,Z_1) \longmapsto j_*(c_1(h_\beta)^{n_{\alpha}-1}\cap (\psibeta|_E)^*(Z_0)) + \psibeta^*(Z_1) \label{3.4}
\end{align}
	
where $h_\beta:= \mathcal{O}_E(1)$ ($E$ thought of as a  $\Pnalpha$-bundle over $ \phibeta^{-1}(\Sigmagamma) $), and $\cap$ denotes the intersection product.
	
\begin{remark}\label{rem-3.5}
	Again, identifying $\Pnalpha \times \Pnbeta$ and $\psialpha^{-1}(\phialpha^{-1}(p))$, it is easy to see that the pull-back bundle $\widetilde{\imath}^*(h_\alpha) \cong \mathcal{O}_{\Pnalpha\times \Pnbeta}(1).$
\end{remark}
	
\vspace{2ex}
	
By lemma \ref{lem-3.3} $\CH_0^{\Q} (\phialpha^{-1}(\Sigmagamma))\cong \Q$, hence the class $(\imath_{\alpha})_*([x])$ will be a $\Q$-basis. 	\newline
	
\begin{lemma}\label{lem-3}
	$g_{\alpha}((\imath_{\alpha})_*([x])) = j_*(\widetilde{\imath}_*{\xtl})$, where $l$ is a line in $\Pnbeta$.
\end{lemma}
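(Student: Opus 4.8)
The plan is to unwind the definition of $g_\alpha$ and reduce the claim to a single intersection computation inside the fiber $\Pnalpha\times\Pnbeta$. Since $(\imath_\alpha)_*[x]$ is a $0$-cycle on $\phialpha^{-1}(\Sigmagamma)$, I read $g_\alpha((\imath_\alpha)_*[x])$ as the image of the pair $((\imath_\alpha)_*[x],\,0)$ under the isomorphism \eqref{3.1}, so that only the first summand of the defining formula \eqref{3.2} survives and
\[
g_\alpha((\imath_\alpha)_*[x]) = j_*\bigl(c_1(h_\alpha)^{n_\beta-1}\cap (\psialpha|_E)^*((\imath_\alpha)_*[x])\bigr).
\]

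The first step is to rewrite the pullback using Lemma \ref{lem-3.4}, which gives $(\psialpha|_E)^*((\imath_\alpha)_*[x]) = \widetilde{\imath}_*[\{x\}\times\Pnbeta]$. The second step is to move the Chern-class operator past the proper pushforward $\widetilde{\imath}_*$ by the projection formula \cite[Proposition 2.5(c)]{Ful}, which yields
\[
c_1(h_\alpha)^{n_\beta-1}\cap \widetilde{\imath}_*[\{x\}\times\Pnbeta] = \widetilde{\imath}_*\bigl(c_1(\widetilde{\imath}^*h_\alpha)^{n_\beta-1}\cap[\{x\}\times\Pnbeta]\bigr).
\]
By Remark \ref{rem-3.5} the restricted class is $\widetilde{\imath}^*h_\alpha \cong \mathcal{O}_{\Pnalpha\times\Pnbeta}(1)$, and its further restriction to the fiber $\{x\}\times\Pnbeta$ is the hyperplane class of $\Pnbeta$.

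The last step is the standard computation on $\{x\}\times\Pnbeta\cong\Pnbeta$: capping its fundamental class with $(n_\beta-1)$ copies of the hyperplane class leaves the class of a one-dimensional linear subspace, i.e. a line $l$, so that $c_1(\widetilde{\imath}^*h_\alpha)^{n_\beta-1}\cap[\{x\}\times\Pnbeta] = \xtl$. Substituting this back into the two displays above gives $g_\alpha((\imath_\alpha)_*[x]) = j_*(\widetilde{\imath}_*\,\xtl)$, which is exactly the asserted identity. I expect the only delicate point to be the bookkeeping of the two tautological twists rather than any of the cycle-theoretic manipulations: $h_\alpha$ is the relative $\mathcal{O}(1)$ for $E$ regarded as a $\Pnbeta$-bundle over $\phialpha^{-1}(\Sigmagamma)$, so its restriction must be read off in the $\Pnbeta$-direction of the product and not the $\Pnalpha$-direction. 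Remark \ref{rem-3.5} is precisely what settles this, and it is what forces the surviving line to lie in the $\Pnbeta$-factor.
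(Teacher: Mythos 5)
Your proposal is correct and follows essentially the same route as the paper's own proof: unwind \eqref{3.2}, substitute via Lemma \ref{lem-3.4}, apply the projection formula for $\widetilde{\imath}$ together with Remark \ref{rem-3.5}, and finish with the standard hyperplane-intersection computation on $\{x\}\times\Pnbeta$. Your closing observation about which tautological twist is being restricted is exactly the point Remark \ref{rem-3.5} is there to settle, so nothing is missing.
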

	
\begin{proof}
	By \eqref{3.2}, 
	\begin{align}
	g_\alpha((\imath_{\alpha})_*([x])) &= j_*(c_1(h_\alpha)^{n_\beta -1}\cap(\psialpha|_E)^*((\imath_{\alpha})_*([x]))) \nonumber \\
	&= j_*(c_1(h_\alpha)^{n_\beta -1}\cap (\widetilde{\imath}_*[\{x\}\times \Pnbeta])) \hspace{15ex} [\text{Lemma \,\,\ref{lem-3.4}}] \label{3.5}
    \end{align}	
		
		By Remark \ref{rem-3.5} and projection formula applied to $\widetilde{\imath}$ (cf. \cite[Proposition 2.5]{Ful}), 
		\begin{align}
		c_1(h_\alpha)^{n_\beta -1}\cap (\widetilde{\imath}_*[\{x\}\times \Pnbeta]) 
		&= \widetilde{\imath}_* (c_1(\widetilde{\imath}^*(h_{\alpha}))^{n_{\beta}-1} \cap [\{x\} \times \Pnbeta]) \nonumber \\ 
		&= \widetilde{\imath}_*(c_1(\mathcal{O}_{\Pnalpha\times \Pnbeta}(1))^{n_\beta -1}\cap [\{x\}\times \Pnbeta]) \label{3.6}
		\end{align}
		
		But $\mathcal{O}_{\Pnalpha\times \Pnbeta}(1)|_{\{x\}\times \Pnbeta} = \mathcal{O}_{\{x\}\times \Pnbeta}(1)$, which corresponds to the divisor of a hyperplane section $H$ (say), and so by definition of intersection product, 
		\[c_1(\mathcal{O}_{\Pnalpha\times \Pnbeta}(1))\cap [\{x\}\times \Pnbeta] = [\{x\} \times H].\] 
		Repeating this $n_{\beta}-1$ times, we get
		\[c_1(\mathcal{O}_{\Pnalpha\times \Pnbeta}(1))^{n_\beta -1}\cap [\{x\}\times \Pnbeta] = [\{x\} \times l],\]
		where $l$ is a line in $\Pnbeta$. Hence from \eqref{3.5} and \eqref{3.6} we finally get $g_\alpha((\imath_{\alpha})_*([x])) = j_*(\widetilde{\imath}_*{\xtl}),$ as claimed.
	\end{proof}
	
	Let $[x]':= (\imath_{\alpha})_*([x])$ and ${\xtl}' :=\widetilde{\imath}_*{\xtl} $.
	
	%Note that $\xtl' \neq 0$, since $g_\alpha$ is an isomorphism and $[x]\neq 0$.
	
	\begin{proposition}\label{mainprop}
		Let $Z := g_\alpha([x]') \in \CH_1^{\Q}(\mathcal{N}),$ then $Z\neq (\psibeta^*\circ \psibeta_*)(Z).$
	\end{proposition}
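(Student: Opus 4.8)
The plan is to distinguish the two classes by a single numerical invariant, namely the degree of the intersection with the exceptional divisor, $\deg\big([E]\cdot(-)\big)\colon\CH_1^{\Q}(\mathcal{N})\to\Q$. Since $\mathcal{N}$ is smooth (it is the blow-up of the smooth variety $\Malpha$ along the smooth centre $\phialpha^{-1}(\Sigmagamma)$), this invariant is well defined. The key observation is asymmetric: $(\psibeta^*\circ\psibeta_*)(Z)$ lies by construction in the image of $\psibeta^*\colon\CH_1^{\Q}(\Mbeta)\to\CH_1^{\Q}(\mathcal{N})$, and I will show that every pullback class is killed by the invariant, whereas $Z$ is not. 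Geometrically, $\psibeta$ does not contract the curve $\xtl$ underlying $Z$, so $\psibeta_*(Z)$ is a genuine nonzero curve class; but pulling it back cannot recover $Z$, because the difference is an exceptional contribution supported on $E$, and this is exactly what the invariant detects.

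First I check that pullback classes are invisible to the invariant. For any $W\in\CH_1^{\Q}(\Mbeta)$ the projection formula gives
\[
\psibeta_*\big([E]\cdot\psibeta^*(W)\big)=\big(\psibeta_*[E]\big)\cdot W .
\]
The morphism $\psibeta$ carries $E$ onto $\phibeta^{-1}(\Sigmagamma)$, whose dimension is strictly smaller than that of $E$, since the fibres of $\psibeta|_E$ are copies of $\Pnalpha$ with $n_\alpha\geq 1$ (the blow-up is nontrivial). Hence $\psibeta_*[E]=0$, the right-hand side vanishes, and because proper pushforward preserves degrees we get $\deg\big([E]\cdot\psibeta^*(W)\big)=0$. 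Taking $W=\psibeta_*(Z)$ yields $\deg\big([E]\cdot(\psibeta^*\circ\psibeta_*)(Z)\big)=0$.

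Next I compute $\deg([E]\cdot Z)$. By Lemma \ref{lem-3}, $Z=j_*\,\xtl'$, where $\xtl'=\widetilde{\imath}_*\xtl\in\CH_1^{\Q}(E)$ is the class of the curve $\{x\}\times l$ inside $E$. Recall that $\mathcal{N}$ is the blow-up of $\Malpha$ along $\phialpha^{-1}(\Sigmagamma)$, with exceptional divisor $E$ and tautological class $h_\alpha=\mathcal{O}_E(1)$ for the $\Pnbeta$-bundle structure $\psialpha|_E$; consequently the normal bundle is $N_{E/\mathcal{N}}=\mathcal{O}_E(-1)=h_\alpha^{-1}$. The self-intersection formula together with the projection formula for the divisor $j\colon E\hookrightarrow\mathcal{N}$ give
\[
[E]\cdot Z=j_*\big(c_1(N_{E/\mathcal{N}})\cap\xtl'\big)=-\,j_*\big(c_1(h_\alpha)\cap\xtl'\big).
\]
Applying the projection formula to $\widetilde{\imath}$ and invoking Remark \ref{rem-3.5}, which identifies $\widetilde{\imath}^*h_\alpha\cong\mathcal{O}_{\Pnalpha\times\Pnbeta}(1)$, exactly the computation in the proof of Lemma \ref{lem-3} gives
\[
\deg\big(c_1(h_\alpha)\cap\xtl'\big)=\deg\big(c_1(\mathcal{O}_{\Pnalpha\times\Pnbeta}(1))\cap\xtl\big)=1,
\]
since $\mathcal{O}_{\Pnalpha\times\Pnbeta}(1)$ restricts to $\mathcal{O}(1)$ on $\{x\}\times\Pnbeta$ and $l$ is a line there. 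Degrees are preserved by proper pushforward, so $\deg([E]\cdot Z)=-1$.

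Comparing the two computations, $\deg([E]\cdot Z)=-1\neq 0=\deg\big([E]\cdot(\psibeta^*\circ\psibeta_*)(Z)\big)$, whence $Z\neq(\psibeta^*\circ\psibeta_*)(Z)$. I expect the main obstacle to be the middle step: one must first recognise that intersection with $E$ is the right invariant (pullbacks are orthogonal to $E$ but $Z$ is not), and then evaluate $c_1(h_\alpha)$ correctly on the curve $\xtl$. The subtle point is that although $\xtl$ is a line in the $\Pnbeta$-direction --- so that $\psibeta$ does not contract it --- it nevertheless lies in a single fibre of the complementary bundle $\psialpha|_E$, on which $h_\alpha$ is the relative hyperplane class; this is precisely why $h_\alpha$ pairs nontrivially with it and the invariant is nonzero.
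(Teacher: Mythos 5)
Your argument is correct, and it takes a genuinely different route from the paper's. The paper proves the proposition by contradiction: assuming $Z=(\psibeta^*\circ\psibeta_*)(Z)$, it applies the excess intersection formula of Proposition \ref{prop-2.10} to rewrite $\psibeta^*\circ\psibeta_*$ on $j_*\xtl'$, uses \cite[Proposition 6.7(c)]{Ful} to upgrade an identity after $j_*$ to an identity in $\CH_*(E)$, and then runs a zero-section/Gysin manipulation with the excess bundle $\mathcal{Q}$ to force $\widetilde{\imath}_*([\Pnalpha\times l])=0$ and hence $g_\alpha([x]')=0$, contradicting the injectivity of $g_\alpha$. You instead produce a single well-defined linear functional $\deg([E]\cdot(-))$ on $\CH_1^{\Q}(\mathcal{N})$ and evaluate it on both sides: it kills every class in $\operatorname{im}(\psibeta^*)$ because $\psibeta_*[E]=0$ ($\psibeta$ collapses the positive-dimensional $\Pnalpha$-fibres of $E$) and the projection formula applies, while on $Z=j_*\xtl'$ the self-intersection formula $\mathcal{O}_{\mathcal{N}}(E)|_E\cong\mathcal{O}_E(-1)=h_\alpha^{-1}$ together with the same degree computation as in Lemma \ref{lem-3} (via Remark \ref{rem-3.5}) gives the value $-1$. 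Both computations are standard and correct; the only hypothesis you use implicitly, $n_\alpha\geq 1$, is already built into the paper's formulas for $g_\beta$. Your approach buys three things: it is shorter; it sidesteps the most delicate steps of the paper's argument (the appeal to \cite[Proposition 6.7(c)]{Ful} and the assertion $\pi_*\circ\pi^*=0$ for the non-proper vector-bundle projection $\pi$); and it proves the strictly stronger statement that $Z$ does not lie in the image of $\psibeta^*$ at all, which is in fact exactly what Proposition \ref{prop-3.9} needs.
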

	
	\begin{proof}
		Let $j_{\beta}: \phibeta^{-1}(\Sigmagamma)\hookrightarrow \Mbeta$ be the inclusion, so that we have the following blow-up diagram:
		
		\[\xymatrixcolsep{2pc}\xymatrix{ E \ar@{^{(}->}[r]^j \ar[d]_{\substack{\mathbb{P}^{n_\alpha}- \\ bundle}}^{\psibeta|_E} 
			& \mathcal{N} \ar[d]^{\psibeta} \\
			\phibeta^{-1}(\Sigmagamma) \ar@{^{(}->}[r]^(.6){j_{\beta}} 
			& \Mbeta
		}
		\]
		
		If $E = \mathbb{P}(N)$, where $N$ denotes the normal bundle of the embedding $j_{\beta}$, Let $ \mathcal{Q}:= \dfrac{(\psibeta|_E)^*(N)}{\mathcal{O}_E(-1)} $ be the \textit{Excess normal bundle} of rank $n_{\alpha}$, as defined in \cite[\textsection 6.7]{Ful}.
		
		Let $\pi: \mathcal{Q} \rightarrow E$ be the vector bundle projection, and let $ c_{n_{\alpha}}(\mathcal{Q})$ denote the top Chern class of the bundle $\mathcal{Q}$.
		
		By lemma \ref{lem-3}, we want to show that
		\[(\psibeta^{*}\circ \psibeta_*)(j_*\xtl') \neq j_*\xtl'.\]
		
		\textit{To the contrary}, suppose they are equal. We have:
		
		\begin{eqnarray*}
			\textnormal{LHS} &=&(\psibeta^{*} \circ \underbrace{\psibeta_*)(j_*}\xtl') \\
			&=& \underbrace{\psibeta^*(j_{\beta *}} (\psibeta|_E)_* \xtl') \hspace{26ex} [\because \psibeta\circ j = j_{\beta}\circ (\psibeta|_E)] \\
			&=& j_*( c_{n_{\alpha}}(\mathcal{Q}) \cap (\psibeta|_E)^*(\psibeta|_E)_*( \xtl')), \hspace{16ex} [\text{Proposition \,\ref{prop-2.10}}]
		\end{eqnarray*}
		
		So, we would get $ j_*( c_{n_{\alpha}}(\mathcal{Q}) \cap (\psibeta|_E)^*(\psibeta|_E)_*( \xtl')) = j_*\xtl' $.
		
		$\therefore$ denoting $Z:= c_{n_{\alpha}}(\mathcal{Q}) \cap (\psibeta|_E)^*(\psibeta|_E)_*( \xtl') - \xtl'$, we have $j_*(Z) =0$; moreover, $(\psibeta|_E)_*(Z)= 0$ (cf. \cite[Example 3.3.3]{Ful}). \newline
		$\therefore Z=0 $ by \cite[Proposition 6.7(c)]{Ful}, i.e.
		\begin{align}
		\xtl' =  c_{n_{\alpha}}(\mathcal{Q}) \cap (\psibeta|_E)^*(\psibeta|_E)_*( \xtl').
		\end{align}
		
		Moreover, by Lemma \ref{lem-2.8},
		
		\[c_{n_{\alpha}}(\mathcal{Q}) \cap (\psibeta|_E)^*(\psibeta|_E)_*( \xtl') = s^*s_*((\psibeta|_E)^*(\psibeta|_E)_*( \xtl')),
		\]
		
		where $s: E\rightarrow \mathcal{Q}$ denotes the zero section of the bundle map $\mathcal{Q} \xrightarrow{\pi} E$, and $s^*$ is defined as in definition \ref{def-2.8}.
		
		$\therefore$ from (3.7) we would finally get:
		\begin{eqnarray}
		\xtl' = s^*s_* ((\psibeta|_E)^*(\psibeta|_E)_*( \xtl')). \label{3.7}
		\end{eqnarray}
		
		Let us write down the following square:
		\begin{align*}
		\xymatrixcolsep{2pc}\xymatrix{{\Pnalpha \times \Pnbeta} \ar[r]^{\cong} \ar[d]_{p_2} 
			& {\psibeta^{-1}(\phibeta^{-1}(p))} \ar@{^{(}->}[r]^(.6){\widetilde{\imath}} \ar[d]
			& E \ar[d]^{\psibeta|_E}_{\textnormal{flat}} \\
			{\Pnbeta} \ar[r]^{\cong}
			& {\phibeta^{-1}(p)} \ar@{^{(}->}[r]^{\imath}_{\textnormal{proper}}
			& {\phibeta^{-1}(\Sigmagamma)}
		}
		\end{align*}

		Recalling $\xtl' := \widetilde{\imath}_*\xtl$, we get from the diagram above: 
		\begin{eqnarray}
		(\psibeta|_E)^*(\psibeta|_E)_*( \xtl')  &=& (\psibeta|_E)^*\underbrace{(\psibeta|_E)_*(\widetilde{\imath}_*}\xtl) \nonumber \\
		&=& \underbrace{(\psibeta|_E)^*((\imath_*}\circ p_{2*})\xtl) \nonumber \\
		&=& (\widetilde{\imath}_*\circ p_2^*)(p_{2*}\xtl) \nonumber \\
		&=& (\widetilde{\imath}_*\circ p_2^*)([l]) \nonumber \\
		&=& \widetilde{\imath}_*([\Pnalpha\times l])
		\end{eqnarray}
		
		$\therefore$ \eqref{3.7} becomes
		\begin{eqnarray}
		\xtl' &=& s^*\circ s_*(\widetilde{\imath}_*[\Pnalpha \times l]) \label{3.13}\\
		\implies \pi^*(\xtl') &=& s_*( \widetilde{\imath}_*([\Pnalpha \times l])) \hspace{10ex} [\because s^* = (\pi^*)^{-1}]
		\end{eqnarray}
		
		But applying $\pi_* $ to both sides, we see that $\pi_*\pi^*(\xtl') = 0$, since clearly $\pi_*\circ \pi^* =0$ as taking inverse image under a bundle map increases the dimension and then taking image decreases the dimension.\newline
		On the other hand, since $\pi\circ s = \textnormal{Id}_E$, 
		\begin{align*}
		\pi_*\circ s_*(\widetilde{\imath}_*[\Pnalpha \times l]) &= \widetilde{\imath}_*([\Pnalpha \times l]) \hspace{27ex}
		\end{align*}
		
		$\therefore \,\,\textnormal{we would get}\,\,\,\widetilde{\imath}_*([\Pnalpha \times l]) = 0$. But from \eqref{3.13} we would get $[\{x\}\times l]' = 0 $, which would give, by Lemma \ref{lem-3}, that $g_{\alpha}([x]') = 0$, which is a contradiction since $g_\alpha$ is an isomorphism. Hence the claim is proved. 
	\end{proof}
	%However, from the last diagram, (Below $\cap$ denotes the intersection product) 
	
	%\begin{align*}
	%j_*(\widetilde{\imath}_*[\{x\} \times l]) &= (j_*\circ \widetilde{\imath}_*)(\mathcal{O}_{\Pnalpha \times \Pnbeta}(1)^{n_{\beta}-1}\cap [\{x\}\times \Pnbeta]) \\
	% &= (j_*\circ\widetilde{\imath}_*)(\widetilde{\imath}^*(\mathcal{O}_E(1))^{n_\beta -1} \cap [\{x\}\times \Pnbeta]) \\
	% &= j_*(\mathcal{O}_E(1)^{n_\beta -1}\cap \widetilde{\imath}_*([\{x\}\times \Pnbeta])) \hspace{8ex} [\underset{\textnormal{Projection formula}}{\textnormal{cf. Ful,Theorem 3.2(c)}}] \\
	% &= j_*(\mathcal{O}_E(1)^{n_\beta -1}\cap \widetilde{\imath}_*(p_2^*([x]))) \\
	% &= j_*(\mathcal{O}_E(1)^{n_\beta -1}\cap(((\psibeta|_E)^* (\imath_*[x]))) \\
	% &= g_{\alpha}([x]) \hspace{27ex} [\textnormal{by definition of }\,g_\beta \,\, \textnormal{in}\, (3.3)]\\
	%  &\neq 0,
	%\end{align*}
\subsection{Proof of the main theorem}
Before stating the main theorem, we make the following  remark:
	
\begin{remark}\label{vectfact}
	Let $V,W$ be two $ \Q $-vector spaces with an isomorphism $\varphi: \Q \langle e \rangle \oplus V \overset{\sim}{\rightarrow} \Q \langle f \rangle \oplus W$. Consider the composite map
	\begin{eqnarray*}
		\Q\langle e \rangle \hookrightarrow \Q \langle e \rangle \oplus V &\overset{\varphi}{\rightarrow}& \Q \langle f \rangle \oplus W  \\
		e &\mapsto& (\psi(e), \phi(e)),
	\end{eqnarray*}
		
and suppose $ \psi(e) \neq 0 $, i.e. the composition below is nonzero:
 \[\Q\langle e \rangle \hookrightarrow \Q \langle e \rangle \oplus V \overset{\varphi}{\rightarrow} \Q \langle f \rangle \oplus W \overset{p_1}{\twoheadrightarrow} \Q \langle f \rangle\] 
Then clearly $\varphi|_V$ induces an isomorphism $V \cong W$, since the composition has to be an isomorphism, as it is a nonzero map between two 1-dimensional spaces, and hence going modulo $\mathbb{Q}\langle e \rangle$ and $\mathbb{Q}\langle f \rangle$ on both sides of $\varphi$, we get our claim.
%\textit{proof:} Since $\psi: \Q \langle e \rangle \rightarrow \Q \langle f \rangle$ is nonzero, $\psi$ is an isomorphism of $\Q$-vector spaces. By assumption, we can write $ \psi(e) = r\cdot f \,\,\, \text{for some}\,\, 0 \neq r\in \mathbb{Q} \,\,\, \implies e = \psi^{-1}(r\cdot f)$.
		
%$\therefore \varphi|_{\Q \langle e \rangle}$ looks like $xe \mapsto (x \cdot rf \,,\, x \cdot \phi (\psi^{-1}(rf))) = r\cdot (xf\,,\, x\cdot g(f)) \,\,\,\forall x\in \Q$, where $ g:= \phi\psi^{-1} : \Q \langle f \rangle \rightarrow W$.
		
%$ \therefore $ ignoring the multiplication by $r$, we get that under $\varphi$, $image (\Q \langle e \rangle) = graph(g) $.
		
%\[
%\therefore \,\, V \cong \dfrac{\mathbb{Q}\langle e \rangle  \oplus V}{\Q \langle e \rangle}  \overset{\overline{\varphi}}{\rightarrow} \dfrac{\Q \langle f \rangle \oplus W}{\varphi(\mathbb{Q} \langle e \rangle)} = \dfrac{\Q \langle f \rangle \oplus W}{graph(g)} \cong W.
%\]
	
%	The last isomorphism follows, since the map 
%	\[\Q \langle f \rangle \oplus W \rightarrow W,\,\, (v,w)\mapsto g(v)-w \]
%	is surjective, with kernel $graph(g)$. (claim proved)
\end{remark}
	
\begin{proposition}\label{prop-3.9}
For generic weights $\alpha, \beta$ in adjacent chambers, the map $g_\beta^{-1}\circ g_\alpha$, when restricted to $\CH_1^{\Q}(\Malpha)$, induces isomorphism 
\[\CH_1^{\Q}(\Malpha)\xrightarrow[\sim]{g_\beta^{-1}\circ g_\alpha} \CH_1^{\Q}(\Mbeta).\]
\end{proposition}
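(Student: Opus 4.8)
The plan is to recognize $g_\beta^{-1}\circ g_\alpha$ as an isomorphism of the shape treated in Remark \ref{vectfact}, and to verify its single hypothesis using Proposition \ref{mainprop}. By Lemma \ref{lem-3.3} we have $\CH_0^{\Q}(\phialpha^{-1}(\Sigmagamma)) \cong \Q \cong \CH_0^{\Q}(\phibeta^{-1}(\Sigmagamma))$, with generators $[x]'$ and, say, $[y]'$ respectively. Thus the isomorphisms \eqref{3.1} and \eqref{3.3} read
\[
g_\alpha : \Q\langle [x]' \rangle \oplus \CH_1^{\Q}(\Malpha) \xrightarrow{\sim} \CH_1^{\Q}(\mathcal{N}), \qquad g_\beta : \Q\langle [y]' \rangle \oplus \CH_1^{\Q}(\Mbeta) \xrightarrow{\sim} \CH_1^{\Q}(\mathcal{N}),
\]
so $\varphi := g_\beta^{-1}\circ g_\alpha$ is an isomorphism $\Q\langle [x]'\rangle \oplus \CH_1^{\Q}(\Malpha) \xrightarrow{\sim} \Q\langle [y]'\rangle \oplus \CH_1^{\Q}(\Mbeta)$. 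Taking $e = [x]'$, $f = [y]'$, $V = \CH_1^{\Q}(\Malpha)$ and $W = \CH_1^{\Q}(\Mbeta)$, Remark \ref{vectfact} will deliver the desired isomorphism $\varphi|_V : \CH_1^{\Q}(\Malpha) \cong \CH_1^{\Q}(\Mbeta)$, provided I show that the $\Q\langle [y]'\rangle$-component of $\varphi([x]')$ is nonzero.

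Write $Z := g_\alpha([x]')$ and $g_\beta^{-1}(Z) = (Z_0, Z_1)$ with $Z_0 \in \Q\langle [y]'\rangle$ and $Z_1 \in \CH_1^{\Q}(\Mbeta)$, so that by \eqref{3.4}
\[
Z = \underbrace{j_*\big(c_1(h_\beta)^{n_\alpha - 1}\cap (\psibeta|_E)^*(Z_0)\big)}_{=:A} + \psibeta^*(Z_1).
\]
The component I must show is nonzero is precisely $Z_0$, and since $A$ vanishes whenever $Z_0$ does, it suffices to prove $A \neq 0$. The key preliminary step is to identify the operator $\psibeta^*\circ\psibeta_*$ appearing in Proposition \ref{mainprop} with the projection of $\CH_1^{\Q}(\mathcal{N})$ onto the summand $\psibeta^*(\CH_1^{\Q}(\Mbeta))$. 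For this I would verify two things: first, that $\psibeta_*(A) = 0$, because $A$ is represented (up to scalar) by a line contained in a fibre of the $\Pnalpha$-bundle $\psibeta|_E$, and such a line is contracted to a point by $\psibeta$, so its pushforward drops dimension and vanishes; second, that $\psibeta_*\circ\psibeta^* = \mathrm{Id}$ on $\CH_1^{\Q}(\Mbeta)$, since $\psibeta$ is a proper birational morphism of smooth varieties of equal dimension (projection formula with $\psibeta_*[\mathcal{N}] = [\Mbeta]$). Combining these gives $\psibeta_*(Z) = Z_1$, hence $(\psibeta^*\circ\psibeta_*)(Z) = \psibeta^*(Z_1)$.

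With this identification in hand the conclusion is immediate: Proposition \ref{mainprop} asserts $Z \neq (\psibeta^*\circ\psibeta_*)(Z) = \psibeta^*(Z_1)$, which forces $A = Z - \psibeta^*(Z_1) \neq 0$, and therefore $Z_0 \neq 0$. This is exactly the hypothesis $\psi(e)\neq 0$ of Remark \ref{vectfact}, so $\varphi$ restricts to an isomorphism $\CH_1^{\Q}(\Malpha) \cong \CH_1^{\Q}(\Mbeta)$, as required.

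I expect the main obstacle to be the middle step, namely the clean identification of $\psibeta^*\circ\psibeta_*$ with projection onto the $\psibeta^*$-summand of the blow-up decomposition \eqref{3.4}. The vanishing $\psibeta_*(A) = 0$ requires the dimension bookkeeping on the exceptional $\Pnalpha$-bundle to be carried out carefully, and the identity $\psibeta_*\circ\psibeta^* = \mathrm{Id}$ must be justified from properness and birationality rather than assumed; once these are in place, everything else is a formal consequence of Lemma \ref{lem-3}, Proposition \ref{mainprop} and Remark \ref{vectfact}.
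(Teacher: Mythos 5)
Your proposal is correct and follows essentially the same route as the paper: both reduce the claim to Remark \ref{vectfact}, identify the $\Q\langle f\rangle$-component of $g_\beta^{-1}(Z)$ by showing that $\psibeta_*$ kills the exceptional summand while $\psibeta_*\circ\psibeta^*=\mathrm{Id}$ (so that $\psibeta^*\circ\psibeta_*$ is projection onto the $\psibeta^*$-summand), and then invoke Proposition \ref{mainprop} to conclude that this component is nonzero. The only difference is presentational: you name the components $Z_0$, $Z_1$, $A$ explicitly where the paper phrases the same computation via $p_1\circ g_\beta^{-1}$ and $p_2\circ g_\beta^{-1}$.
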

	
\begin{proof}
	Using Lemma \ref{lem-3.3}, let us write $\CH_0^{\Q}(\phialpha^{-1}(\Sigmagamma)) = \Q \langle e \rangle$ and $\CH_0^{\Q}(\phibeta^{-1}(\Sigmagamma)) = \Q \langle f \rangle$, where $e,f$ are some basis elements. Recall the maps $g_\alpha, g_\beta$ from \eqref{3.1} and \eqref{3.3}.\\
	
	Consider the composition
	\begin{eqnarray}
	\mathbb{Q} \langle e \rangle \hookrightarrow \mathbb{Q} \langle e \rangle \oplus \CH_1^{\Q}(\Malpha) \xrightarrow[\sim]{g_{\beta}^{-1}\circ g_{\alpha}} \mathbb{Q} \langle f \rangle \oplus \CH_1^{\Q}(\Mbeta) \overset{p_1}{\twoheadrightarrow}
	\mathbb{Q} \langle f \rangle \label{compositenonzero}
	\end{eqnarray}
	where $p_1$ is the first projection.
	
	According to the remark above, we will be done if we can show that the composition in \eqref{compositenonzero} is nonzero.\\
	Consider the first projection $p_1\circ g_{\beta}^{-1}: CH_1^{\Q}(\mathcal{N})\twoheadrightarrow \Q \langle f \rangle$ with respect to $g_{\beta}$. \\ 
	This map can be described as follows: \\
	We note that the other projection with respect to $g_{\beta}$, namely $p_2\circ g_{\beta}^{-1}:CH_1^{\Q}(\mathcal{N}) \twoheadrightarrow \CH_1^{\Q}(\Mbeta) $ is given by $(\psibeta)_*$, since by \cite[Corollary 9.15]{Voi}
	$ \psibeta_* \circ \psibeta^* = \textnormal{Id}_{\Mbeta} $, and $ \psibeta_* $ sends the terms coming from $ \CH_0^{\Q}(\phibeta^{-1}(\Sigmagamma)) $ to 0, since their image under $\psibeta$ has strictly smaller dimension than the source.\newline
	$\therefore \forall Z \in \CH_1^{\Q}(\mathcal{N})\,\,,\,\, Z = (Z-(\psibeta^* \circ \psibeta_*)(Z)) + \psibeta^*(\psibeta_*(Z))$, and by description of $g_\beta$ in \eqref{3.4}, we get that $Z-(\psibeta^* \circ \psibeta_*)(Z)$ is the first projection with respect to $g_\beta$, i.e.
	\[(p_1\circ g_{\beta}^{-1})(Z) = Z - (\psibeta^* \circ \psibeta_*)(Z).\]
	
	$\therefore$ from Proposition \ref{mainprop} we get that 
	\[(p_1\circ g_{\beta}^{-1})(Z) \neq 0,\,\,\textnormal{where} \,\, Z = g_{\alpha}([x]')\]
	
	also, $[x]'$ is a basis for $CH_0^{\Q}(\phialpha^{-1}(\Sigmagamma)) = \Q\langle e \rangle$; in other words, the composite map in \eqref{compositenonzero} is nonzero. Hence we are done.
	
%	is a nonzero map, since in that case,
%	\begin{eqnarray*}
%		\CH_1^{\mathbb{Q}}(\Malpha)\cong \dfrac{\mathbb{Q} \langle e \rangle \oplus \CH_1^{\Q}(\Malpha)}{\mathbb{Q} \langle e \rangle} \cong \dfrac{\mathbb{Q} \langle f \rangle \oplus \CH_1^{\Q}(\Mbeta)}{image(\mathbb{Q} \langle e \rangle)},
%	\end{eqnarray*}
	
%	and the image of $ \mathbb{Q} \langle e \rangle $ will look like the graph of a map $ g: \mathbb{Q} \langle f \rangle \rightarrow \mathbb{Q} \langle f \rangle \oplus \CH_1^{\mathbb{Q}}(\Mbeta)$, so that 
%	\[
%	\CH_{1}^{\Q}(\Malpha) \cong \dfrac{\mathbb{Q} \langle f \rangle \oplus \CH_1^{\Q}(\Mbeta)}{image(\mathbb{Q} \langle e \rangle)} = \dfrac{\mathbb{Q} \langle f \rangle \oplus \CH_1^{\Q}(\Mbeta)}{graph(g)} \cong \CH_1^{\Q}(\Mbeta).
%	\]
\end{proof}

\begin{theorem}\label{thm-3.10}
	For any two generic weights $\alpha$ and $\beta$, there exists a canonical isomorphism 
	\[\CH_1^{\Q}(\Malpha) \cong \CH_1^{\Q}(\Mbeta).\]
\end{theorem}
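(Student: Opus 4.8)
The plan is to bootstrap from Proposition~\ref{prop-3.9}, which already furnishes the isomorphism whenever $\alpha$ and $\beta$ lie in adjacent chambers, to the case of two arbitrary generic weights. First I would use that $V_m$ is a product of simplices, hence convex, so that the straight line segment from $\alpha$ to $\beta$ lies entirely inside $V_m$. The walls are the traces $H_i\cap V_m$ of finitely many hyperplanes $H_1,\dots,H_l$, and the locus where a segment could meet two walls simultaneously, or meet a single wall non-transversally, is contained in the codimension-$2$ set $\bigcup_{i<j}(H_i\cap H_j)$ together with a finite set. Hence, after an arbitrarily small perturbation of the two endpoints inside their own chambers (which changes neither $\Malpha$ nor $\Mbeta$), I may assume the segment is \emph{generic}: it meets the walls transversally and one at a time.

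Along such a segment the chambers it traverses single out generic weights $\alpha=\gamma_0,\gamma_1,\dots,\gamma_N=\beta$, with each consecutive pair $\gamma_k,\gamma_{k+1}$ separated by a single wall. Applying Proposition~\ref{prop-3.9} to each adjacent pair and composing the resulting isomorphisms produces an isomorphism
\[
\CH_1^{\Q}(\Malpha)\;\cong\;\CH_1^{\Q}(\Mbeta),
\]
which establishes existence.

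To justify the word \emph{canonical}, I must show the isomorphism does not depend on the chosen generic segment, equivalently on the sequence of single-wall crossings. Because $V_m$ is convex, any two such segments can be deformed into one another through a family of segments, and in such a deformation the combinatorial crossing sequence changes only when the moving segment sweeps past a generic point of a double intersection $H_i\cap H_j$; higher-order intersections are avoided in general position. It therefore suffices to verify the \emph{local relation} at each such point: the four single-wall isomorphisms of Proposition~\ref{prop-3.9} obtained by crossing $H_i$ and $H_j$ in the two possible orders close up into a commutative square. Near a generic point of $H_i\cap H_j$ the two crossings are governed by distinct walls and involve disjoint strata of strictly semistable bundles, so they are independent and the square commutes.

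I expect this last step --- the path-independence encoded in the commuting square around each double wall --- to be the main obstacle, since everything preceding it is formal once Proposition~\ref{prop-3.9} is in hand. The real content is to check that near $H_i\cap H_j$ the blow-up/blow-down description underlying Proposition~\ref{prop-3.9} factors as a product of the two independent flips, so that the induced Chow-group isomorphisms commute.
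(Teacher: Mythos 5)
Your proposal follows essentially the same route as the paper: connect $\alpha$ to $\beta$ by a chain of chambers, each separated from the next by a single wall (the paper simply asserts that the finitely many chambers can be so ordered, while you produce the chain via a perturbed line segment, which is if anything more careful), note that weights in a common chamber give isomorphic moduli spaces, and compose the isomorphisms of Proposition~\ref{prop-3.9}. The path-independence issue you flag as the main obstacle is a legitimate concern but is not addressed in the paper either --- its proof stops at existence of the composed isomorphism --- so relative to the paper's own argument your proposal is complete once the chain of single-wall crossings is in place.
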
	

\begin{proof}
By \cite[Lemma 2.7 and Remark 2.9]{BH}, the moduli spaces corresponding to weights in the same chamber are isomorphic. Moreover, we can order the finitely many chambers in such a way that any two consecutive chambers are separated by a single wall. Combining this fact with Proposition \ref{prop-3.9}, we get our claim.	
\end{proof}
\vspace{1ex}

\section{Consequence: description of the Chow group for moduli of parabolic bundles of rank 2 and determinant $\mathcal{O}_X(x)$}
%Let $X$ be a smooth projective curve over $\mathbb{C}$ of genus $g\geq %3$. Let us fix a line bundle $\mathcal{L}$ of degree 1. Let %$\mathcal{M}$ denote the moduli space of isomorphism classes of stable %bundles of rank 2 over $X$ of fixed determinant $\mathcal{L}$ of %degree 1. Moreover, let us fix a point $ x\in X $, and parabolic %weights $(\alpha):= 0\leq \alpha_1 <\alpha_2<1 $, where $ \alpha_i $'s %are rational. Let $ \mathcal{M}_\alpha $ denote the moduli space of %S-equivalence classes of parabolic semistable bundles of rank 2 over $ %X $ of determinant $ \mathcal{L} $ and fixed parabolic weight ($ %\alpha $) at $x$.

From now on, we consider the case when the rank is 2 and determinant is $\mathcal{O}_X(x)$ for some closed point $x\in X$, and full flags. In rank 2 case, having full flags amounts to giving a 1-dimensional subspace of each fibre over the parabolic points. 
 
\subsection{When the generic weight is small enough}
We recall the following Proposition from \cite{BY}:

\begin{proposition}[BY, Proposition 5.2]\label{prop-4.1}
	Suppose $E$ be a vector bundle of rank $r$ and degree $d$ on $X$. Define the following quantities:
	\begin{align*}
	\epsilon_{\pm}(d,r) &= inf\{\pm(\frac{d}{r} - \frac{d'}{r'}) \,|\, d', r' \in \mathbb{Z}, 1\leq r' < r, and \pm(\frac{d}{r}-\frac{d'}{r'}) > 0\} \\
	\epsilon(d,r) &= min\{\epsilon_{\pm}(d,k) \,|\, k=1,...,r\}
	\end{align*}
	Furthermore, suppose $\sum_{p\in S}\sum_{1=i}^{s_p} m_{p,i}\alpha_{p,i} < \epsilon(d,r)/2$.
	
	\textnormal{(i)} If $E$ is stable as a regular bundle, then $E_*$ is parabolic stable. \\
	\hspace*{2ex}\textnormal{(ii)} If $E_*$ is parabolic stable, then $E$ is semistable as regular bundle.
\end{proposition}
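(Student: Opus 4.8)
The plan is to reduce both parts to a single elementary estimate relating parabolic slopes to ordinary slopes, with the error term controlled by the total weight. I would write $w(E_*) := \sum_{p\in S}\sum_i m_{p,i}\alpha_{p,i}$ for the total weight appearing in the hypothesis, so that $Par\mu(E_*) = \mu(E) + w(E_*)/r$, and likewise, for any sub-bundle $F\hookrightarrow E$ of rank $r'$ carrying the induced parabolic structure, $Par\mu(F_*) = \mu(F) + w(F_*)/r'$. Since for (semi)stability and for parabolic (semi)stability it suffices to test against sub-bundles (saturated subsheaves) with their induced structure, I would work exclusively with such $F$. Throughout I would abbreviate $\epsilon := \epsilon(d,r)$, so the hypothesis reads $w(E_*) < \epsilon/2$.

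First I would record two facts. The \emph{weight comparison}: for every sub-bundle $F$ one has $0 \le w(F_*) \le w(E_*)$. To see this I would rewrite, at each $p$, the induced total weight as
\[
w_p(F_*) = \sum_i \alpha_{p,i}\bigl(\dim(F_p\cap E_{p,i}) - \dim(F_p\cap E_{p,i+1})\bigr),
\]
observe that each dimension jump is bounded by $\dim(E_{p,i}/E_{p,i+1}) = m_{p,i}$ and that the weights are nonnegative, so $w_p(F_*) \le \sum_i m_{p,i}\alpha_{p,i}$, and sum over $p\in S$. The \emph{slope gap}: by the very definition of $\epsilon(d,r)$, whenever $F$ is a proper sub-bundle with $\mu(F)\ne\mu(E)$, a positive value of $\mu(E)-\mu(F)$ is bounded below by $\epsilon_+(d,r)$ and a positive value of $\mu(F)-\mu(E)$ by $\epsilon_-(d,r)$, both of which are $\ge \epsilon$.

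For part (i), I would assume $E$ stable and take a proper sub-bundle $F$, so $\mu(E)-\mu(F)\ge\epsilon$. Then
\[
Par\mu(E_*) - Par\mu(F_*) = \bigl(\mu(E)-\mu(F)\bigr) + \frac{w(E_*)}{r} - \frac{w(F_*)}{r'} \ge \epsilon + 0 - w(F_*) > \epsilon - \tfrac{\epsilon}{2} > 0,
\]
using $r,r'\ge 1$, $w(E_*)\ge0$ and $w(F_*)\le w(E_*)<\epsilon/2$; hence $Par\mu(F_*) < Par\mu(E_*)$ for all proper $F$, i.e. $E_*$ is parabolic stable. For part (ii), I would assume $E_*$ parabolic stable and, for contradiction, that $E$ is not semistable, taking $F$ to be the maximal destabilizing sub-bundle, so $\mu(F)-\mu(E)\ge\epsilon$. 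The same computation with the roles reversed gives
\[
Par\mu(F_*) - Par\mu(E_*) = \bigl(\mu(F)-\mu(E)\bigr) + \frac{w(F_*)}{r'} - \frac{w(E_*)}{r} \ge \epsilon + 0 - w(E_*) > \epsilon - \tfrac{\epsilon}{2} > 0,
\]
contradicting parabolic stability; hence $E$ is semistable. Note that this argument yields only semistability, since the strict inequality is lost exactly when $\mu(F)=\mu(E)$.

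The main obstacle is bookkeeping rather than any deep input. I expect the delicate points to be, first, establishing the weight comparison $w(F_*)\le w(E_*)$ cleanly from the definition of the induced flag (the displayed regrouping by flag step), and second, keeping straight which one-sided gap, $\epsilon_+$ or $\epsilon_-$, is invoked in each part, together with the fact that $\epsilon(d,r)\le\epsilon_\pm(d,r)$ so that the single constant $\epsilon$ controls both directions. Everything else is the two-line slope estimate above.
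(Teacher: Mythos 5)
Your argument is correct, and it is worth noting that the paper itself offers no proof of this statement: it is imported verbatim as \cite[Proposition 5.2]{BY}, so there is nothing internal to compare against. Your two estimates are exactly the content of the Boden--Yokogawa proof: the identity $Par\mu(F_*)=\mu(F)+w(F_*)/r'$, the weight comparison $w(F_*)\le w(E_*)$ via the regrouping $\sum_i \alpha_{p,i}\bigl(\dim(F_p\cap E_{p,i})-\dim(F_p\cap E_{p,i+1})\bigr)$ with each jump bounded by $m_{p,i}$, and the observation that any nonzero slope difference $\mu(E)-\mu(F)$ is bounded away from zero by $\epsilon_{\pm}(d,r)\ge\epsilon(d,r)$ because degrees and ranks are integers. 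The one place to be slightly careful is the crude bound $w(F_*)/r'\le w(F_*)$ (valid since $r'\ge 1$), which you use correctly, and your closing remark that the argument degenerates exactly when $\mu(F)=\mu(E)$ correctly explains why (ii) yields only semistability rather than stability. I see no gap.
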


We call a weight $\alpha$ \textit{small} if it satisfies the condition of the Proposition above. 
%Let $ E_* $ denote a rank 2 parabolic semistable bundle of determinant $\mathcal{O}_X(x)$. Since $det(\mathcal{O}_X(x))=1$, $E$ will be regular semistable iff it is stable. Now, if we choose our parabolic weights small enough (e.g. sum of the weights is less than 1), then $E_*$ is in fact parabolic stable, and moreover, by \cite[Prop. 5.2]{BY}, the vector bundle $E$ is semistable as vector bundle (hence stable) as well. Hence there is a map $ \mathcal{M}_{\alpha} \rightarrow \mathcal{M} $ by forgetting the parabolic structure.

\begin{lemma}\label{projbund}
	Let us choose a small generic weight $\alpha$ as in Proposition above. There exists a canonical morphism $\Malpha \rightarrow \mathcal{M}$ making $ \Malpha $ into a $(\mathbb{P}^1)^n$-bundle over $\mathcal{M}$, where $n = |S|$.
\end{lemma}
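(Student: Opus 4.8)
We have rank 2, determinant $\mathcal{O}_X(x)$, full flags (so at each parabolic point we choose a line in the fiber), and a small generic weight $\alpha$. We want a canonical morphism $\Malpha \to \mathcal{M}$ making $\Malpha$ a $(\mathbb{P}^1)^n$-bundle.

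Let me think about what $\mathcal{M}$ is here. $\mathcal{M} = \mathcal{M}(2, \mathcal{O}_X(x))$ is the moduli of stable rank-2 bundles with determinant $\mathcal{O}_X(x)$, which has odd degree (degree 1), so $\gcd(2,1)=1$. Thus $\mathcal{M}$ is a fine moduli space, smooth projective, and stability = semistability.

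**The candidate morphism.** A parabolic bundle $E_*$ with data $(2, \mathcal{O}_X(x), m, \alpha)$ consists of an underlying bundle $E$ of rank 2, determinant $\mathcal{O}_X(x)$, together with a choice of line $\ell_p \subset E_p$ in the fiber at each $p \in S$ (this is the full-flag data in rank 2). The natural "forgetful" map sends $E_* \mapsto E$. The plan is to show this is well-defined as a morphism $\Malpha \to \mathcal{M}$, and that it realizes $\Malpha$ as the fiber bundle whose fiber over $E$ is $\prod_{p\in S} \mathbb{P}(E_p) \cong (\mathbb{P}^1)^n$.

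**Why smallness of $\alpha$ makes this work.** Since $\deg \mathcal{O}_X(x) = 1$ is odd, every semistable bundle of this determinant is automatically stable, so $\mathcal{M}$ parametrizes stable bundles and is a fine moduli space. By Proposition 4.1(i), because $\alpha$ is small, if $E$ is stable then $E_*$ is parabolic stable for *any* choice of lines $\ell_p$. Conversely by 4.1(ii), if $E_*$ is parabolic stable then $E$ is semistable, hence stable (odd degree). So the forgetful map $E_* \mapsto E$ lands in $\mathcal{M}$, and the fiber over a given stable $E$ is exactly the set of all choices of lines $(\ell_p)_{p\in S}$, with no stability constraint whatsoever — every choice is allowed. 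This is the crucial point where smallness enters: it decouples the parabolic stability condition from the flag data, so the fiber is the full product $\prod_p \mathbb{P}(E_p)$.

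**Constructing the bundle structure.** Since $\mathcal{M}$ is a fine moduli space, there is a universal bundle $\mathcal{E}$ on $X \times \mathcal{M}$. For each $p \in S$, restrict to $\{p\} \times \mathcal{M}$ to get a rank-2 bundle $\mathcal{E}_p$ on $\mathcal{M}$, and form the projectivization $\mathbb{P}(\mathcal{E}_p) \to \mathcal{M}$, a $\mathbb{P}^1$-bundle. The plan is to show that the fiber product $\prod_{p\in S}^{\mathcal{M}} \mathbb{P}(\mathcal{E}_p)$ over $\mathcal{M}$ represents the moduli functor for $\Malpha$: a family of parabolic bundles over a base $T$ with underlying family classified by $f: T \to \mathcal{M}$ is the same as $f^*\mathcal{E}$ together with $n$ sub-line-bundles of the $f^*\mathcal{E}_p$, i.e., $n$ sections of the pulled-back $\mathbb{P}^1$-bundles, i.e., a map $T \to \prod_p \mathbb{P}(\mathcal{E}_p)$ over $\mathcal{M}$. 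Combined with the identification of closed points above (every such datum is parabolic stable, and every parabolic stable point arises this way), the universal properties match, giving a canonical isomorphism $\Malpha \cong \prod_{p\in S}^{\mathcal{M}} \mathbb{P}(\mathcal{E}_p)$, which is a $(\mathbb{P}^1)^n$-bundle over $\mathcal{M}$.

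**The main obstacle.** The delicate step is verifying that the bijection on points upgrades to an isomorphism of varieties (or schemes) via the universal properties, rather than merely a set-theoretic or even bijective-morphism statement. Concretely, one must either invoke functoriality of the fine moduli spaces $\Malpha$ and $\mathcal{M}$ directly — checking that the forgetful operation on families is well-defined and that the flag data is exactly captured by sections of $\mathbb{P}(\mathcal{E}_p)$ — or, if one only wants the coarse statement, argue normality/smoothness of both sides plus a bijective morphism to conclude isomorphism. Since $\alpha$ is generic, $\Malpha$ is smooth and a fine moduli space (Section 2.5), and $\prod_p \mathbb{P}(\mathcal{E}_p)$ is smooth of the same dimension $\dim\mathcal{M} + n$; so a bijective morphism between smooth varieties over $\mathbb{C}$ with the forgetful map being the natural one should suffice, but the cleanest route is the functorial comparison. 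I expect the functorial well-definedness of the forgetful morphism $\Malpha \to \mathcal{M}$ — i.e., that it comes from an actual morphism of moduli functors and not just a map on points — to require the most care.
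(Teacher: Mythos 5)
Your proposal is correct and follows essentially the same route as the paper: the forgetful map to $\mathcal{M}$, Proposition 4.1(i)--(ii) to decouple parabolic stability from the choice of flags, and the identification of $\Malpha$ with the fibre product over $\mathcal{M}$ of the projectivizations $\mathbb{P}(\mathcal{E}_p)$ of the restrictions of the universal bundle to $\{p\}\times\mathcal{M}$. The subtlety you flag at the end (upgrading the pointwise bijection to an isomorphism of varieties) is handled in the paper simply by citing \cite[Theorem 4.2]{BY}, so your added discussion of the functorial comparison is, if anything, more careful than the source.
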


\begin{proof}
Since $\alpha$ is generic, $E_*$ is in fact parabolic stable, so by proposition \ref{prop-4.1} (ii), $E$ is regular semistable bundle (hence stable) as well. Hence there is a map 
\[g: \mathcal{M}_{\alpha} \rightarrow \mathcal{M} \]
 by forgetting the parabolic structure.

For simplicity, first let $n=1$, i.e. only one parabolic point. Recall that $\mathcal{M}$ is a fine moduli space, since $deg(\mathcal{L}) =1.$ (\cite[Proposition 3.2]{BY}). Consider the the \textit{universal (or Poincare)} bundle over $ X\times \mathcal{M} $, whose fibre over each $(p,[E])$ is given by $E_p$. Restrict the bundle over $ \{p\}\times \mathcal{M} $. Call the resulting bundle $ \mathcal{E} $. For each $ [E]\in \mathcal{M} $, the fibre of $\mathbb{P}(\mathcal{E})$ over $ (p,[E]) $ is $ \mathbb{P}(E_p) $, i.e. lines in $ E_p $. Hence the fibre of $\mathbb{P}(\mathcal{E})$ over $(p,[E])$ gives the set of all possible full flags at $E_p$. Moreover, by proposition 4.1 (i), the parabolic bundle $E_*$ resulting from the weight $\alpha$ and parabolic point $p$ will be  automatically parabolic stable. In other words, for each $[E]\in \mathcal{M}$, each point in the fibre of $\mathbb{P}(\mathcal{E})$ over $(p,[E])$ corresponds to a unique point $[E_*] \in \Malpha$. This way we identify $\Malpha$ with $\mathbb{P}(\mathcal{E})$. (cf. \cite[Theorem 4.2]{BY})

In general, if the parabolic data consists of $n$ distinct set of closed points $S=\{p_1,\cdots p_n\}$ and generic weight $\alpha$ as in Proposition \ref{prop-4.1}, for each $i=1,...,n$ let $ \mathcal{E}_i $ denote the restriction of the universal bundle over $ X\times \mathcal{M}$ to $ \{p_i\}\times \mathcal{M} $. Then an analogous argument as above shows that $\mathcal{M}_{\alpha}$ is isomorphic to the fibre product of $\mathbb{P}(\mathcal{E}_i)$'s over $\mathcal{M}$, i.e. \[\mathcal{M}_\alpha \cong \mathbb{P}(\mathcal{E}_1)\times_{\mathcal{M}} \mathbb{P}(\mathcal{E}_2)\times_{\mathcal{M}} ...\times_{\mathcal{M}}\mathbb{P}(\mathcal{E}_n)\].
\end{proof}

\begin{proposition}\label{prop-4.3}
	For small enough generic weights $\alpha$, \[\CH_1^{\Q}(\Malpha) \cong \Q^n\oplus \CH_1^{\Q}(\mathcal{M}), \,\,where\,\, n=|S|.\]
\end{proposition}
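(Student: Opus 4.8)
The plan is to use Lemma~\ref{projbund}, which identifies $\Malpha$ with the iterated fibre product $\mathbb{P}(\mathcal{E}_1)\times_{\mathcal{M}}\cdots\times_{\mathcal{M}}\mathbb{P}(\mathcal{E}_n)$, a tower of $\mathbb{P}^1$-bundles over $\mathcal{M}$. First I would set up the computation inductively. Writing $P_k := \mathbb{P}(\mathcal{E}_1)\times_{\mathcal{M}}\cdots\times_{\mathcal{M}}\mathbb{P}(\mathcal{E}_k)$, each $P_k \to P_{k-1}$ is the projectivization of a rank-$2$ vector bundle pulled back from $\mathcal{M}$, hence a $\mathbb{P}^1$-bundle over $P_{k-1}$, with $P_0 = \mathcal{M}$ and $P_n = \Malpha$. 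The engine of the proof is the projective bundle formula for Chow groups, \cite[Theorem 9.25]{Voi}: for a $\mathbb{P}^1$-bundle $\rho: P_k \to P_{k-1}$ with relative hyperplane class $\xi_k := c_1(\mathcal{O}_{P_k}(1))$, one has the decomposition
\[
\CH^{\Q}_*(P_k) \;\cong\; \rho^*\CH^{\Q}_*(P_{k-1}) \;\oplus\; \big(\xi_k \cap \rho^*\CH^{\Q}_*(P_{k-1})\big),
\]
so that every class on $P_k$ is written uniquely as $\rho^*(a) + \xi_k\cap\rho^*(b)$. I would track what this does in the single graded piece relevant to $1$-cycles, being careful with the dimension shift: since $\xi_k\cap(-)$ lowers dimension by one, a $1$-cycle on $P_k$ decomposes into $\rho^*$ of a $1$-cycle on $P_{k-1}$ together with $\xi_k\cap\rho^*$ of a $2$-cycle (i.e.\ a class in $\CH^{\Q}_2(P_{k-1})$) on $P_{k-1}$.

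Carrying this out, the key observation is that at each stage the "new" summand contributes exactly one copy of $\Q$ to $\CH_1^{\Q}$. The fibre of $\rho$ is $\mathbb{P}^1$, whose only $1$-cycle class is the fundamental class of the fibre itself; so the extra piece $\xi_k\cap\rho^*\CH_2^{\Q}(P_{k-1})$, viewed in degree $1$, is generated over $\Q$ by the class of the fibre $\xi_k\cap\rho^*[\text{pt-dimensional stratum}]$, and I expect it to be one-dimensional, accounting for the single $\Q$ that each projectivization adds. Inducting down the tower, $\CH_1^{\Q}(\Malpha)$ acquires $n$ such copies of $\Q$ (one from each of the $n$ projectivizations) on top of the pulled-back $\CH_1^{\Q}(\mathcal{M})$ from the base, giving
\[
\CH_1^{\Q}(\Malpha) \;\cong\; \Q^n \oplus \CH_1^{\Q}(\mathcal{M}).
\]
The base case $\CH_1^{\Q}(\mathbb{P}(\mathcal{E}_1)) \cong \Q \oplus \CH_1^{\Q}(\mathcal{M})$ follows directly from the projective bundle formula for a single $\mathbb{P}^1$-bundle.

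The main obstacle I anticipate is purely bookkeeping: the projective bundle formula is a statement about all graded pieces at once, and the degree shift induced by $\xi_k\cap(-)$ means that the degree-$1$ part of $\CH^{\Q}_*(P_k)$ receives contributions from \emph{two different} graded pieces of $\CH^{\Q}_*(P_{k-1})$ (namely degree $1$ via $\rho^*$ and degree $2$ via $\xi_k\cap\rho^*$). I must therefore argue that the degree-$2$ contribution collapses to exactly $\Q$ at each step rather than bringing in the full $\CH_2^{\Q}(P_{k-1})$. The clean way to handle this is to verify that $\xi_k\cap\rho^*(-)$ restricted to degree $2$ lands, after the dimension shift, in a single $\Q$ generated by the fibre class; equivalently, that the relevant piece is governed by the $\mathbb{P}^1$-fibre rather than by the base. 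Once this is pinned down, the induction closes and the direct-sum decomposition follows. I would keep the isomorphism explicit enough that the $\Q^n$ summand is identified with the span of the $n$ fibre classes, which will matter for the clean statement in Theorem~\ref{thm-4.4}.
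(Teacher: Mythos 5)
Your overall route is the same as the paper's---realize $\Malpha$ as the tower $P_n \to P_{n-1} \to \cdots \to P_0 = \mathcal{M}$ of $\mathbb{P}^1$-bundles from Lemma~\ref{projbund} and apply the projective bundle formula \cite[Theorem 9.25]{Voi} at each stage---but your dimension bookkeeping is inverted, and this leads you to misidentify where the extra summand comes from. For a $\mathbb{P}^1$-bundle $\rho: P_k \to P_{k-1}$, the pullback $\rho^*$ raises the \emph{dimension} of a cycle by one (it preserves codimension), so the decomposition of the degree-one piece reads
\[
\CH_1^{\Q}(P_k) \;\cong\; \rho^*\CH_0^{\Q}(P_{k-1}) \;\oplus\; \xi_k\cap\rho^*\CH_1^{\Q}(P_{k-1}),
\]
not $\rho^*\CH_1^{\Q}(P_{k-1}) \oplus \xi_k\cap\rho^*\CH_2^{\Q}(P_{k-1})$ as you wrote. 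The ``new'' summand is therefore a full copy of $\CH_0^{\Q}(P_{k-1})$ (spanned by classes of fibres over points of $P_{k-1}$), and $\CH_2^{\Q}(P_{k-1})$ never enters. Note that in your version the worry you raise cannot be resolved: each summand in the projective bundle decomposition is an isomorphic copy of the corresponding Chow group of the base, so $\xi_k\cap\rho^*\CH_2^{\Q}(P_{k-1})$ would genuinely be all of $\CH_2^{\Q}(P_{k-1})$ and would not ``collapse to $\Q$.''

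The second, more substantive gap is that even with the correct indexing, the statement that each new summand contributes exactly one copy of $\Q$ is not automatic and you only assert it (``I expect it to be one-dimensional''). What is needed is that $\CH_0^{\Q}(P_{k-1})\cong\Q$ for every $k$, i.e.\ that all points of $P_{k-1}$ are rationally equivalent up to torsion. The paper supplies this by quoting the rationality of $\mathcal{M}$ (\cite[Theorem 1.2]{KS}), observing that each $P_{k-1}$ is then rational as a projective bundle over a rational variety, and invoking the birational invariance of $\CH_0$ for smooth projective varieties (\cite[Example 16.1.11]{Ful}) together with $\CH_0(\mathbb{P}^N)\cong\mathbb{Z}$. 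Without some such input your induction does not close: you would only obtain $\CH_1^{\Q}(\Malpha)\cong\bigoplus_{i=0}^{n-1}\CH_0^{\Q}(P_i)\oplus\CH_1^{\Q}(\mathcal{M})$. Once you correct the dimension shift and add the rationality argument, your identification of the $\Q^n$ summand with the span of the $n$ fibre classes is the right picture.
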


\begin{proof}
For each $1\leq i\leq n, n=|S|$, let $ \mathcal{F}_i:= \mathbb{P}(\mathcal{E}_1)\times_{\mathcal{M}} \mathbb{P}(\mathcal{E}_2)\times_{\mathcal{M}} ...\times_{\mathcal{M}} \mathbb{P}(\mathcal{E}_i) $.\newline
By \ref{projbund} we have $\Malpha \cong \mathcal{F}_n$, so we have the following fibre diagram:
\begin{align*}
\xymatrix{{\Malpha} \ar[r] \ar[d] 
	& \mathbb{P}(\mathcal{E}_n) \ar[d] \\
	\mathcal{F}_{n-1} \ar[r] 
	& \mathcal{M}
}
\end{align*}

The left and right vertical arrows above are $\mathbb{P}^1$-bundles, and hence by \cite[Theorem 9.25]{Voi}, there exist isomorphisms of Chow groups:
\begin{align}
\CH_1^{\Q}(\mathbb{P}(\mathcal{E}_n)) &\cong \CH_0^{\Q}(\mathcal{M}) \oplus \CH_1^{\Q}(\mathcal{M}) \nonumber \\
\textnormal{and}\,\,\CH_1^{\Q}(\mathcal{M}_\alpha) \cong \CH_1^{\Q}(\mathcal{F}_n) &\cong \CH_0^{\Q}(\mathcal{F}_{n-1})\oplus \CH_1^{\Q}(\mathcal{F}_{n-1}) \label{4.1}
\end{align} 

Iterating the same for $\mathcal{F}_{n-1}, \mathcal{F}_{n-2}$, and so on, we get from \eqref{4.1}:
\begin{eqnarray*}
	\CH_1^{\Q}(\mathcal{M}_\alpha) \cong \CH_1^{\Q}(\mathcal{F}_n) &\cong& \CH_0^{\Q}(\mathcal{F}_{n-1})\oplus (\CH_0^{\Q}(\mathcal{F}_{n-2})\oplus \CH_1^{\Q}(\mathcal{F}_{n-2})) \\
	&\vdots&  \\
	&\cong& \bigoplus_{i=1}^{n-1}\CH_0^{\Q}(\mathcal{F}_i)\oplus \CH_1^{\Q}(\mathcal{F}_1) \\
	&\cong& \bigoplus_{i=1}^{n-1}\CH_0^{\Q}(\mathcal{F}_i)\oplus \CH_0^{\Q}(\mathcal{M})\oplus \CH_1^{\Q}(\mathcal{M}) \hspace{4ex} \text{\cite[Theorem 9.25]{Voi}}
	%[\textnormal{Voi, Theorem 9.25}]
\end{eqnarray*}

Now, by \cite[Theorem 1.2]{KS} $ \mathcal{M} $ is rational, and hence any projective bundle over it must also be rational; so each $ \mathcal{F}_i $ must be rational. By \cite[Example 16.1.11]{Ful}, the Chow group of 0-cycles is a birational invariant, hence it follows that $ \CH_0^{\Q}(\mathcal{M})\cong \Q $, and $ \CH_0^{\Q}(\mathcal{F}_i)\cong \Q \,\forall i$.

Hence we conclude that 
\begin{equation}
\CH_{1}^{\mathbb{Q}}(\mathcal{M}_\alpha)\cong \mathbb{Q}^n \oplus \CH_{1}^{\mathbb{Q}}(\mathcal{M}).
\end{equation}

\end{proof}

Finally, let us recall the following result due to I. Choe and J. H. Hwang: 
\begin{theorem}[\text{\cite[Main Theorem]{CH}}]\label{thm-4.4}
There is a canonical isomorphism $\CH_1^{\Q}(\mathcal{M}) \cong \CH_0^{\Q}(X)$.
\end{theorem}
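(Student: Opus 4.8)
The plan is to realise the isomorphism geometrically, using the Hecke curves, which are the minimal rational curves on $\mathcal{M}=\mathcal{M}(2,\mathcal{O}_X(x))$. Fix $p\in X$ and a rank-$2$ bundle $V$ with $\det V=\mathcal{O}_X(x)(-p)$; the elementary modifications $V\subset W\subset V(p)$ with $W/V$ of length one are parametrised by the $1$-dimensional subspaces of $V_p$, i.e.\ by $\mathbb{P}(V_p)\cong\mathbb{P}^1$, and $\ell\mapsto[W_\ell]$ maps this $\mathbb{P}^1$ into $\mathcal{M}$ with image a Hecke curve. Letting $V$ vary over the moduli space $\mathcal{M}^-$ of such bundles and $p$ over $X$ yields the whole family of Hecke curves, parametrised (birationally) by a $\mathbb{P}^1$-bundle over $\mathcal{M}^-\times X$. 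First I would use this family to define a homomorphism
\[\Phi\colon \CH_0^{\Q}(X)\longrightarrow \CH_1^{\Q}(\mathcal{M}),\qquad [p]\longmapsto h_p,\]
where $h_p$ is the class of a Hecke curve lying over $p$. This is well defined: for fixed $p$ the relevant curves are parametrised by a projective bundle over $\mathcal{M}^-$, which is unirational, so its $\CH_0^{\Q}$ is $\Q$ and all these curves are rationally equivalent in $\mathcal{M}$; and because the total family is algebraic over $X$, the assignment descends to rational equivalence of $0$-cycles on $X$.

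Next I would establish that $\Phi$ is surjective. Its homological part is easy: $\mathcal{M}$ is a smooth rational (by \cite[Theorem 1.2]{KS}) variety of Picard number one, so $H_2(\mathcal{M})_{\Q}\cong\Q$ is spanned by the class of a minimal rational curve, which a Hecke curve realises, and thus $\Phi$ already surjects onto this line. The real content is that every homologically trivial $1$-cycle is, modulo rational equivalence, a $\Q$-combination of differences of Hecke curves. For this I would use that $\mathcal{M}$ is covered by the single irreducible family of Hecke curves and run a moving argument for $1$-cycles, degenerating and recombining an arbitrary curve into Hecke curves by exploiting the abundance of minimal rational curves through a general point. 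Since every Hecke-curve class is of the form $h_p$ — the dependence on $V$ having been absorbed by the unirationality of $\mathcal{M}^-$ — generation by Hecke curves is precisely the surjectivity of $\Phi$.

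For injectivity I would factor $\Phi$ through the intermediate Jacobian. By the Mumford--Newstead theorem, $H^{3}(\mathcal{M},\mathbb{Z})$ is torsion free and carries a Hodge structure isomorphic to $H^{1}(X)$, so by hard Lefschetz the intermediate Jacobian governing $1$-cycles on $\mathcal{M}$ is isogenous to $J(X)$ and, after $\otimes\,\Q$, is identified with $J(X)\otimes\Q$. Combining the degree map with the Abel--Jacobi map on homologically trivial cycles gives
\[\CH_1^{\Q}(\mathcal{M})\xrightarrow{(\deg,\,\mathrm{AJ})} \Q\oplus\bigl(J(X)\otimes\Q\bigr)\cong \CH_0^{\Q}(X),\]
and the key computation is that $(\deg,\mathrm{AJ})\circ\Phi$ is the classical isomorphism $\CH_0^{\Q}(X)\cong\Q\oplus(J(X)\otimes\Q)$ given by degree together with the Abel--Jacobi map of $X$; here one verifies that the period map of the Hecke family matches the Mumford--Newstead identification. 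Since the Abel--Jacobi map of the curve is injective by Abel's theorem, $\Phi$ is injective, and with surjectivity it becomes the desired canonical isomorphism.

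I expect the main obstacle to be the generation step in the second paragraph: showing that Hecke curves generate $\CH_1^{\Q}(\mathcal{M})$, so that no $1$-cycles escape the family. This is a genuine statement about Chow groups and does not follow from the cohomological computation alone; it forces one to use the fine geometry of minimal rational curves on $\mathcal{M}$ — the large, irreducible variety of Hecke curves through a general point and the structure of its parameter space — to carry out an effective moving and degeneration argument. The same geometric input is what makes the period computation in the third paragraph tractable, so once generation is in hand the remaining steps are comparatively formal.
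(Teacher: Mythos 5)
First, a point of calibration: the paper does not prove this statement at all --- Theorem \ref{thm-4.4} is imported verbatim as the Main Theorem of \cite{CH} and used as a black box in the proof of Theorem \ref{cor-4.4}. So there is no internal argument to compare yours against; what you have written is an attempt to reprove the cited result of Choe and Hwang. Your outline does track their actual strategy in broad strokes (Hecke curves as the generating family of minimal rational curves, well-definedness of $[p]\mapsto h_p$ via rational connectedness of the parameter space of Hecke curves over a fixed $p$, and injectivity via the Abel--Jacobi map into the intermediate Jacobian, identified with $J(X)$ through Mumford--Newstead and hard Lefschetz). The logical skeleton is sound: if $(\deg,\mathrm{AJ})\circ\Phi$ is the classical isomorphism for the curve, then $\Phi$ is injective, and surjectivity of $\Phi$ would finish the argument.

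The genuine gap is exactly where you locate it, and flagging it does not close it: the claim that every $1$-cycle on $\mathcal{M}$ is rationally equivalent to a $\Q$-combination of Hecke curves is the entire content of the theorem, and ``run a moving argument by degenerating and recombining an arbitrary curve into Hecke curves'' is not an argument. Nothing formal about covering families of rational curves yields generation of $\CH_1$; a variety can be covered by a single irreducible family of rational curves and still have $1$-cycles not generated by them modulo rational equivalence, so one must use specific geometry. In \cite{CH} this step is carried out by an explicit geometric device: they exploit the parabolic moduli space with one marked point --- precisely the $\mathbb{P}^1$-bundle $\mathbb{P}(\mathcal{E})$ over $\mathcal{M}$ appearing in Lemma \ref{projbund} of this paper --- which carries a second fibration (the Hecke correspondence to the degree-zero moduli space), and they push an arbitrary $1$-cycle through the two fibration structures, using the projective bundle formula and an induction, to decompose it into Hecke-curve classes. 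Without that (or an equivalent effective decomposition), your second paragraph is a restatement of what must be proved rather than a proof, and the period computation in your third paragraph, while standard, is moot until surjectivity is secured.
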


We are now able to extend this result for the moduli of parabolic bundles.
\begin{theorem}\label{cor-4.4}
	In case of rank 2 and determinant $\mathcal{O}_X(x)$, for any generic weight $\alpha$, we have \[\CH_1^{\Q}(\mathcal{M}_\alpha) \cong \Q^n \oplus\, \CH_0^{\Q}(X),\,\,\textnormal{where}\,\,n=|S|.\]
\end{theorem}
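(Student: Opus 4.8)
The plan is to assemble the statement directly from three results already established, by reducing the case of an arbitrary generic weight to a single conveniently chosen one. First I would fix a \emph{small} generic weight $\alpha_0$, i.e.\ one satisfying the strict inequality $\sum_{p\in S}\sum_i m_{p,i}\alpha_{p,i} < \epsilon(d,r)/2$ of Proposition \ref{prop-4.1}. Such a weight exists: at each parabolic point the full-flag weights $0 \leq \alpha_{p,1} < \alpha_{p,2} < 1$ may be taken with all coordinates arbitrarily close to $0$, so the region of $V_m$ cut out by this inequality is a nonempty open subset; since the finitely many walls $H_1,\dots,H_l$ cannot cover an open set, deleting them leaves generic weights inside this region. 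For such an $\alpha_0$, Lemma \ref{projbund} realizes $\mathcal{M}_{\alpha_0}$ as an iterated $\mathbb{P}^1$-bundle over $\mathcal{M}$, and Proposition \ref{prop-4.3} then gives $\CH_1^{\Q}(\mathcal{M}_{\alpha_0}) \cong \Q^n \oplus \CH_1^{\Q}(\mathcal{M})$.

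Next I would substitute the Choe--Hwang isomorphism of Theorem \ref{thm-4.4}, namely $\CH_1^{\Q}(\mathcal{M}) \cong \CH_0^{\Q}(X)$. This is applicable precisely because the rank is $2$ and $\deg \mathcal{O}_X(x) = 1$ is coprime to $2$, so that $\mathcal{M} = \mathcal{M}(2,\mathcal{O}_X(x))$ is a nonsingular fine moduli space over a curve of genus $g \geq 3$, which is exactly the setting of \cite{CH}. Combining with the previous step yields the desired description $\CH_1^{\Q}(\mathcal{M}_{\alpha_0}) \cong \Q^n \oplus \CH_0^{\Q}(X)$ for the single small generic weight $\alpha_0$.

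Finally, to pass to an \emph{arbitrary} generic weight $\alpha$, I would invoke Theorem \ref{thm-3.10}: since both $\alpha$ and $\alpha_0$ are generic, there is a canonical isomorphism $\CH_1^{\Q}(\Malpha) \cong \CH_1^{\Q}(\mathcal{M}_{\alpha_0})$, and composing the two gives $\CH_1^{\Q}(\Malpha) \cong \Q^n \oplus \CH_0^{\Q}(X)$ for every generic $\alpha$. Essentially all of the substantive content lives in the earlier results, so there is no serious obstacle remaining in this concluding step. The only points that genuinely deserve care are the two I would check explicitly rather than take for granted: that a small generic weight really exists (that some chamber meets the region of Proposition \ref{prop-4.1}, as argued above), and that the $(\mathbb{P}^1)^n$-bundle structure of Lemma \ref{projbund} and the weight-invariance of Theorem \ref{thm-3.10} are both formulated for the same rank-$2$, full-flag setup, so that the two isomorphisms compose without any compatibility gap.
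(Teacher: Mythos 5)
Your proposal is correct and follows exactly the paper's own argument: combine Proposition \ref{prop-4.3} with Theorem \ref{thm-4.4} to handle a small generic weight, then transport the conclusion to an arbitrary generic weight via Theorem \ref{thm-3.10}. The extra checks you flag (existence of a small generic weight in some chamber, compatibility of the setups) are sensible but routine, and the paper treats them as implicit.
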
	

\begin{proof}
Combining Proposition \ref{prop-4.3} and Theorem \ref{thm-4.4}, we get $\CH_1^{\Q}(\mathcal{M}_\alpha) \cong \Q^n \oplus\, \CH_0^{\Q}(X)$ for small weights $\alpha$. But using Theorem \ref{thm-3.10}, we can conclude that the same result holds true for arbitrary generic weights as well.
\end{proof}

\end{document}